\theoremstyle{plain}
\newtheorem{thm}{Theorem}[section]
\newtheorem{theorem}[thm]{Theorem}
\theoremstyle{definition}
\newtheorem{defn}[thm]{Definition}
\newtheorem{conj}[thm]{Conjecture}
\theoremstyle{remark}
\newtheorem{rem}{Remark}
\newtheorem*{remark*}{Remark}
\newcommand{\cF}{{\mathcal{F}}}
\newcommand{\cL}{{\mathcal{L}}}
\newcommand{\cS}{{\mathcal{S}}}
        \newcommand{\field}[1]{{\mathbb{#1}}}
        \newcommand{\ZZ}{\field{Z}}
        \newcommand{\RR}{\field{R}}
\newcommand{\TT}{\field{T}}
\newcommand{\Tr}{\operatorname{Tr}}
\newcommand{\tr}{\operatorname{tr}}
\begin{document}

\title[Adiabatic limits and noncommutative Weyl formula]{Adiabatic limits and noncommutative Weyl formula}
\author[Y. A. Kordyukov]{Yuri A. Kordyukov}
\address{Institute of Mathematics\\
         Russian Academy of Sciences\\
         112~Chernyshevsky str.\\ 450008 Ufa\\ Russia} \email{yurikor@matem.anrb.ru}

\thanks{Supported by the Russian Foundation of Basic Research
(grant no. 09-01-00389)}

\begin{abstract}
We discuss asymptotic behavior of the eigenvalue distribution of the
differential form Laplacian on a Riemannian foliated manifold when
the metric on the ambient manifold is blown up in directions normal
to the leaves (in the adiabatic limit). Motivated by analogies with
semiclassical spectral asymptotics, we use ideas and notions of
noncommutative geometry to suggest a conjectural formula for the
eigenvalue distribution in the adiabatic limit, which we call
noncommutative Weyl formula. We review known results and discuss the
correctness of the noncommutative Weyl formula in each case.
\end{abstract}
\maketitle

\section*{Introduction}
In this paper we discuss a particular asymptotic spectral problem
for the Laplace operator on a closed Riemannian foliated manifold,
namely, the asymptotic behavior of its eigenvalue distribution in
the adiabatic limit.

Let $(M,{\mathcal F})$ be a closed foliated manifold, $\dim M = n$,
$\dim {\mathcal F} = p$, $p+q=n$, endowed with a Riemannian metric
$g$. Then we have a decomposition of the tangent bundle to $M$ into
a direct sum $TM=F\oplus H$, where $F=T{\mathcal F}$ is the tangent
bundle to $\cF$ and $H=F^{\bot}$ is the orthogonal complement of
$F$, and the corresponding decomposition $g=g_{F}+g_{H}$ of the
metric into the sum of tangential and transverse components. Define
a one-parameter family $g_{\varepsilon}$ of Riemannian metrics on
$M$ by
\begin{equation}\label{e:gh}
g_{\varepsilon}=g_{F} + {\varepsilon}^{-2}g_{H}, \quad \varepsilon >
0.
\end{equation}
By the adiabatic limit, we mean the asymptotic behavior of
Riemannian manifolds $(M, g_\varepsilon)$ as $\varepsilon\to 0$. In
this form, the notion of the adiabatic limit was introduced by
Witten \cite{Witten85} in the study of the global anomaly. We refer
the reader to a survey paper \cite{bedlewo2-andrey} for some history
and references.

For any $\varepsilon>0$, consider the Laplace operator
$\Delta_{\varepsilon}$ on differential forms defined by the metric
$g_\varepsilon$. It is a second order elliptic differential
operator, which is self-adjoint in the Hilbert space $L^2(M,\Lambda
T^{*}M,g_\varepsilon)$ of square integrable differential forms on
$M$ endowed with the inner product induced by $g_\varepsilon$ and
has a complete orthonormal system of eigenforms. Denote by
$0\leq\lambda_0(\varepsilon)\leq\lambda_1(\varepsilon)\leq\lambda_2(\varepsilon)
\leq\cdots $ the eigenvalues of $\Delta_{\varepsilon}$, taking
multiplicities into account. We will discuss the asymptotic behavior
as $\varepsilon\to 0$ of the trace of $f(\Delta_\varepsilon)$:
\[
\tr
f(\Delta_\varepsilon)=\sum_{i=0}^{+\infty}f(\lambda_i(\varepsilon)),
\]
for any sufficiently nice function $f$, for instance, for $f\in
S(\RR)$.

In order to explain what kind of spectral problems we have, we first
transfer the operators $\Delta_{\varepsilon}$ to the fixed Hilbert
space
\[
L^2\Omega = L^{2}(M, \Lambda T^{*}M, g),
\]
using an isomorphism $\Theta_\varepsilon$ from $L^{2}(M,\Lambda
T^{*}M , g_{\varepsilon})$ to $L^2\Omega$ defined as follows. With
respect to a bigrading on $\Lambda T^{*}M$ given by
\[
\Lambda^{i,j}T^{*}M=\Lambda^{i}H^{*}\otimes \Lambda^{j}F^{*}, \quad
0 \leq i \leq q, \quad 0\leq j \leq p,
\]
we have, for $u \in L^{2}(M,\Lambda^{i,j}T^{*}M , g_{\varepsilon})$,
\begin{equation*}
\Theta_{\varepsilon}u = \varepsilon^{i}u.
\end{equation*}
The operator $\Delta_\varepsilon$ in $L^{2}(M,\Lambda T^{*}M ,
g_{\varepsilon})$ corresponds under the isometry
$\Theta_{\varepsilon}$ to the operator $L_{\varepsilon}=
\Theta_{\varepsilon}\Delta_\varepsilon\Theta_{\varepsilon}^{-1}$ in
$L^2\Omega$.

With respect to the above bigrading of $\Lambda T^*M$, the de Rham
differential $d$ can be written as
\[
d=d_F+d_H+\theta,
\]
where
\begin{enumerate}
\item  $ d_F=d_{0,1}: C^{\infty}(M,\Lambda^{i,j}T^*M)\to
C^{\infty}(M,\Lambda^{i,j+1}T^*M)$ is the tangential de Rham
differential, which is a first order tangentially elliptic
operator, independent of the choice of $g$;
\item $d_H=d_{1,0}: C^{\infty}(M,\Lambda^{i,j}T^*M)\to
C^{\infty}(M,\Lambda^{i+1,j}T^*M)$ is the transversal de Rham
differential, which is a first order transversally elliptic
operator;
\item $\theta=d_{2,-1}: C^{\infty}(M,\Lambda^{i,j}T^*M)\to
C^{\infty}(M,\Lambda^{i+2,j-1}T^*M)$ is a zeroth order
differential operator.
\end{enumerate}
One can show that
\[
d_\varepsilon = \Theta_{\varepsilon}d\Theta_{\varepsilon}^{-1} = d_F
+ \varepsilon d_H + \varepsilon^{2}\theta,
\]
and the adjoint of $d_\varepsilon$ in  $L^2\Omega$ is
\[
\delta_\varepsilon=\Theta_{\varepsilon}\delta
\Theta_{\varepsilon}^{-1} = \delta_F + \varepsilon \delta_H +
\varepsilon^{2}\theta^{*}.
\]
Therefore, for the operator
\begin{equation*}
L_\varepsilon = \Theta_{\varepsilon}\Delta_\varepsilon
\Theta_{\varepsilon}^{-1} = d_\varepsilon\delta_\varepsilon +
\delta_\varepsilon d_\varepsilon,
\end{equation*}
one has
\begin{equation*}
L_\varepsilon =\Delta_F + \varepsilon^2\Delta_H +
\varepsilon^4\Delta_{\theta}+ \varepsilon K_1+\varepsilon^2K_2
+\varepsilon^3K_3,
\end{equation*}
where
\begin{itemize}
  \item $\Delta_F=d_Fd^*_F+d^*_Fd_F$ is the tangential Laplacian;
  \item $\Delta_H=d_Hd^*_H+d^*_Hd_H$ is the transverse Laplacian;
  \item $\Delta_{\theta}=\theta\theta^{*}+ \theta^{*}\theta$ and $K_2 = d_F
\theta^{*}+ \theta^{*} d_F + \delta_F\theta+\theta \delta_F$ are of
zero order;
  \item $K_1 =  d_F \delta_H+ \delta_H d_F +
\delta_Fd_H+d_H \delta_F$ and $K_3 = d_H \theta^{*}+ \theta^{*} d_H
+ \delta_H\theta+\theta \delta_H$ are first order differential
operators.
\end{itemize}

Differential (or pseudodifferential) operators with a small
parameter are usually set up on $\RR^n$, within the Weyl calculus,
under the name of $h$-admissible or semiclassical operators. Recall
that a semiclassical differential operator on $\RR^n$ is the
differential operator $B_h$, depending on a parameter $h>0$, of the
form
\[
B_h=\sum_{j=0}^kh^jB_j(x,hD_x),\quad x\in \RR^n,
\]
where each $B_j(x,\xi)$ is a polynomial in $\xi$:
\[
B_j(x,\xi)=\sum_{|\alpha|\leq k}b_\alpha^{(j)}\xi^\alpha,
\]
and
\[
B_j(x,hD_x)=\sum_{|\alpha|\leq k}b_\alpha^{(j)}D_x^\alpha,
\]
where $\alpha=(\alpha_1,\ldots,\alpha_n)\in \ZZ^n_+$ is a
multi-index, and we use standard notation: $|\alpha|=\alpha_1 +
\ldots + \alpha_n$,
$\xi^\alpha=\xi_1^{\alpha_1}\ldots\xi_n^{\alpha_n}$,
$D_x^\alpha=D_{x_1}^{\alpha_1}\ldots D_{x_n}^{\alpha_n}$,
$D_{x_j}=\frac{1}{i}\frac{\partial}{\partial x_j}$.

The leading term $B_0(x,\xi)$ is called the principal symbol of
$B_h$.

On an arbitrary smooth manifold, the definitions of semiclassical
differential operators and their principal symbols are much more
delicate \cite{Paul-Uribe95}. Nevertheless, there is one particular
important case, when this can be done rather easily, the case of
Schr\"odinger operator.

Let $(X,g)$ be a compact Riemannian manifold, $\dim X=n$. The
Schr\"odinger operator on $X$ is the self-adjoint second order
differential operator $H_h$ acting in $C^\infty(X)$ by the formula
\[
H_h=-h^2\Delta +V,
\]
where $h>0$ is the semiclassical parameter, $\Delta$ is the
Laplace-Beltrami operator associated with the Riemannian metric $g$,
and $V\in C^\infty(X)$ is a real-valued smooth function on $X$ (a
potential), which is identified with the corresponding
multiplication operator. The principal symbol $\sigma(H_h)\in
C^\infty(T^*X)$ of the semiclassical operator $H_h$ is given by
\begin{equation*}
\sigma(H_h)(x,\xi)=g^{T^*X}_x(\xi,\xi)+V(x),\quad (x,\xi)\in T^{*}X,
\end{equation*}
where $g^{T^*X}_x$ is the induced metric in $T^*_xX$.

For any $f\in \cS(\RR)$, the operator $f(H_h)$ is of trace class,
and its asymptotic behavior as $h\to 0$ is described by the
semiclassical Weyl formula:
\begin{equation}\label{e:Weyl}
\operatorname{tr} f(H_h)=\frac{1}{(2\pi h)^{n}}\int_{T^*X}
f(\sigma(H_h)(x,\xi))\,dx\,d\xi+o(h^{-n}),\quad h\rightarrow 0+,
\end{equation}
where $dx\,d\xi$ is the canonical Liouville measure on $T^*X$.

In our case, in a local foliated chart with coordinates $(x,y)\in
\RR^p\times\RR^q$, where the leaves of the foliation are given by
the level sets $y={\rm const}$, the operator $L_\varepsilon$ has the
form
\begin{equation}\label{e:Le}
L_\varepsilon=\sum_{j=0}^k\varepsilon^jA_j(x, y, D_x, \varepsilon
D_y),\quad x\in \RR^p,\quad y\in \RR^q.
\end{equation}
In other words, the parameter $\varepsilon$ enters the coefficients
of derivatives with respect to $y$, and there is no $\varepsilon$ in
the coefficients of the derivatives with respect to $y$.

Asymptotic spectral problems of this kind appeared for the first
time in molecular quantum mechanics in a paper by Born and
Oppenheimer in 1927. Born and Oppenheimer studied molecular bound
states, that is, roughly speaking, eigenfunctions of the
Schr\"odinger operator
\[
H_h=-\frac{h^2}{2m}\Delta_x-\frac{h^2}{2M}\Delta_y+V(x,y)
\]
in $\RR^n=\RR^p_x\times \RR^q_y$, where the $x$ variables describe
the electron motion, the $y$ variables describe the nuclear motion,
$m$ is the electron mass, $M$ is the nuclei mass and $V$ is the
interaction potential. They suggested an approximation for molecular
bound states based on disparity between the electron mass $m$ and
the nuclei mass $M$. This approximation makes use of asymptotic
expansions in the small parameter $\varepsilon$, where
$\varepsilon^4=m/M$.

The first rigorous mathematical paper on Born-Oppenheimer
approximation was published by Seiler in 1973. Mathematical
investigations of Born-Oppenheimer approximation continue very
actively to present day (see, for instance, survey papers
\cite{Belov-Dobr-Tud,HagedornJoye} and references there). Later,
similar problems were also studied in many areas of mathematical and
theoretical physics (see, for instance, \cite{Belov-Dobr-Tud}).

Observe that an operator of the form \eqref{e:Le} can be considered
as an semiclassical differential operator in $\RR^q$ with an
operator-valued symbol. In particular, the asymptotic formula for
the trace of $f(L_\varepsilon)$ for such an operator $L_\varepsilon$
in $\RR^n$ can be written in a form, similar to the semiclassical
Weyl formula \eqref{e:Weyl}. For instance, let us consider the
Schr\"odinger operator
\[
H_\varepsilon=\Delta_X+\varepsilon^2 \Delta_Y+V
\]
in $M=X\times Y$, where $X$ and $Y$ are closed Riemannian manifolds,
$\Delta_X$ and $\Delta_Y$ are the Laplace-Beltrami operators on $X$
and $Y$ respectively, $V\in C^\infty(M)$. The principal symbol of
$H_\varepsilon$ is a smooth function $\sigma(H_\varepsilon)$ on
$T^*Y$, whose values are differential operators on $X$:
\begin{equation}\label{e:principal-op}
\sigma(H_\varepsilon)(y,\eta)=g^{T^*Y}_y(\eta,\eta)+\Delta_X+V_y,\quad
(y,\eta)\in T^{*}Y,
\end{equation}
where $V_y\in C^\infty(X)$ is the restriction of $V$ to $X\times
\{y\}$.

For any $f\in \cS(\RR)$ and $(y,\eta)\in T^{*}Y$, the operator
$f(\sigma(H_\varepsilon)(y,\eta))$ is a smoothing operator on $X$,
so it is a trace class operator. Moreover, one can show that
\[
\int_{T^*Y} \operatorname{tr}
f(\sigma(H_\varepsilon)(y,\eta))\,dy\,d\eta<\infty.
\]
Then we have
\begin{equation}\label{e:Weyl-op}
\operatorname{tr} f(H_\varepsilon)
=\frac{1}{(2\pi\varepsilon)^{q}}\int_{T^*Y} \operatorname{tr}
f(\sigma(H_\varepsilon)(y,\eta))\,dy\,d\eta+o(\varepsilon^{-q}),\quad
\varepsilon \rightarrow 0+.
\end{equation}
This formula was proved, for instance, in \cite{Bal} for the case of
$\RR^n$, using machinery of pseudodifferential operators with
operator-valued symbols.

In the foliation case, such a nice picture holds only locally. The
global structure of a foliation may be very complicated. There may
be no base manifold, leaves may be non-compact, and elliptic
operators along leaves have, in general, continuous spectrum.
Therefore, it is even unclear how a general asymptotic formula for
$\operatorname{tr} f(\Delta_\varepsilon)$ could look like.
Nevertheless, one can write such a conjectural formula, which we
call noncommutative Weyl formula, using ideas and notions of
noncommutative geometry developed by A. Connes. (For the basic
information on noncommutative geometry of foliations, we refer the
reader to \cite{survey,umn-survey} and references therein.)

First, observe that, using the results of \cite{asymp}, one can show
that, for any $f\in \cS(\RR)$, we have the estimate
\[
|\tr f(\Delta_\varepsilon)|=|\tr f(L_\varepsilon)|<
C_1\varepsilon^{-q}, \quad 0<\varepsilon\leq 1,
\]
with some $C_1>0$. Moreover, it is easy to show that the terms
$\varepsilon^2K_2$, $\varepsilon^3K_3$ and
$\varepsilon^4\Delta_{\theta}$ are of lower order in some sense,
and, therefore, they don't contribute to the leading term of the
formula. More precisely, if we denote
\[
\bar{L}_\varepsilon =\Delta_F + \varepsilon K_1 +
\varepsilon^2\Delta_H,
\]
then, for any $f\in \cS(\RR)$, we have (with some $C_2>0$):
\[
|\tr f(L_\varepsilon)-\tr f(\bar{L}_\varepsilon)|
<C_2\varepsilon^{1-q}, \quad 0<\varepsilon\leq 1,
\]
Therefore, we can restrict our considerations by the operator
$\bar{L}_\varepsilon$.

Now we have certain difficulties mentioned above with an appropriate
definition of the principal symbol of $\bar{L}_\varepsilon$, first
of all, with a definition of the principal symbol of the term
$\varepsilon K_1$. We were able find a solution in the case when
$\varepsilon K_1$ is a lower order operator in some sense (this
happens for Riemannian foliations) or when $K_1=0$. In this case,
our considerations are reduced to the operator
\[
\bar{\bar{L}}_\varepsilon =\Delta_F + \varepsilon^2\Delta_H,
\]
Here we observe that the operator $\bar{\bar{L}}_\varepsilon$ has
the form of a Schr\"odinger operator on the leaf space $M/\cF$,
where $\Delta_H$ plays the role of the Laplace operator, and
$\Delta_F$ the role of the operator-valued potential on $M/\cF$.
Using this analogy, we define the principal symbol of this operator
as a second order differential operator $\sigma(\Delta_\varepsilon)$
on the conormal bundle $N^*\cF$ of $\cF$, which is tangentially
elliptic with respect to a natural foliation $\cF_N$ on $N^*\cF$. We
will consider the operator $\sigma(\Delta_\varepsilon)$ as a family
of self-adjoint elliptic operators along the leaves of ${\mathcal
F}_N$. Then, for any $f\in S({\mathbb R})$, one can define
$f(\sigma(\Delta_\varepsilon))$ as an element of the twisted
$C^*$-algebra $C^*(N^*\cF, {\mathcal F}_N,\pi^{*}\Lambda T^{*}M)$
associated with the foliation $(N^*\cF, {\mathcal F}_N)$. The leaf
space $N^*{\cF}/\cF_N$ can be considered as the cotangent bundle to
$M/\cF$, and the algebra $C^*(N^*\cF, {\mathcal F}_N,\pi^{*}\Lambda
T^{*}M)$ can be viewed as the noncommutative analogue of the algebra
of continuous vector-valued differential forms on this singular
space.

The foliation $\cF_N$ has a natural transverse symplectic structure.
The corresponding canonical transverse Liouville measure is holonomy
invariant and, by noncommutative integration theory \cite{Co79},
determines the trace $\operatorname{tr}_{{\mathcal F}_N}$ on the
$C^*$-algebra $C^*(N^*\cF, {\mathcal F}_N,\pi^{*}\Lambda T^{*}M)$.
The trace $\operatorname{tr}_{{\mathcal F}_N}$ is the noncommutative
analogue of the integral over the leaf space $N^*{\cF}/\cF_N$ with
respect to the transverse Liouville measure.

Replacing in the formula (\ref{e:Weyl-op}) the integration over
$T^{*}X$ and the operator trace $\operatorname{Tr}$ by the trace
$\operatorname{tr}_{{\mathcal F}_N}$ and the principal symbol
$\sigma(H_\varepsilon)$ by the principal
$\sigma(\Delta_\varepsilon)$, we suggest the following
noncommutative analogue of this formula.

\begin{conj}[The noncommutative Weyl formula]
For any $f\in S({\mathbb R})$, the following asymptotic formula
holds:
\begin{equation}\label{e:ncWeyl}
\operatorname{tr} f(\Delta_{\varepsilon})
=\frac{1}{(2\pi\varepsilon)^{q}} \operatorname{tr}_{{\mathcal F}_N}
f(\sigma(\Delta_\varepsilon)) +o(\varepsilon^{-q}),\quad
\varepsilon\rightarrow 0.
\end{equation}
\end{conj}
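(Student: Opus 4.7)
The plan is to reduce to $\bar{\bar{L}}_\varepsilon=\Delta_F+\varepsilon^2\Delta_H$, treat it as a semiclassical Schr\"odinger-type operator on the singular leaf space $M/\cF$ with operator-valued potential $\Delta_F$, and compute the leading trace asymptotics via a combination of Helffer--Sj\"ostrand functional calculus, an operator-valued pseudodifferential parametrix in foliated charts, and Connes' noncommutative integration on $(N^*\cF,\cF_N)$.

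\textbf{Step 1 (reduction).} The estimate $|\tr f(L_\varepsilon)-\tr f(\bar L_\varepsilon)|=O(\varepsilon^{1-q})=o(\varepsilon^{-q})$ stated in the excerpt lets me drop the terms $\varepsilon^2K_2,\varepsilon^3K_3$ and $\varepsilon^4\Delta_\theta$. When $K_1=0$ or, more generally, $\varepsilon K_1$ is of ``lower order'' in the adiabatic sense (the Riemannian foliation case), an analogous commutator/symbol estimate will also yield $|\tr f(\bar L_\varepsilon)-\tr f(\bar{\bar{L}}_\varepsilon)|=O(\varepsilon^{1-q})$, so it suffices to prove \eqref{e:ncWeyl} with $\Delta_\varepsilon$ replaced by $\bar{\bar{L}}_\varepsilon$.

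\textbf{Step 2 (operator-valued semiclassical parametrix).} Let $\tilde f$ be an almost-analytic extension of $f\in\cS(\RR)$. The Helffer--Sj\"ostrand formula
\[
f(\bar{\bar{L}}_\varepsilon)=-\tfrac{1}{\pi}\int_\CC \bar\partial\tilde f(z)\,(z-\bar{\bar{L}}_\varepsilon)^{-1}\,dL(z)
\]
reduces the problem to a uniform approximation of the resolvent. In a foliated chart $U\cong\RR^p_x\times\RR^q_y$, formula \eqref{e:Le} displays $\bar{\bar{L}}_\varepsilon$ as an $\varepsilon$-semiclassical differential operator in $y$ with operator-valued coefficients in $x$, whose principal symbol at $(y,\eta)\in T^*\RR^q_y$ is the tangentially elliptic, positive self-adjoint operator $\sigma(\Delta_\varepsilon)(y,\eta)=g^{N^*\cF}_y(\eta,\eta)+\Delta_F|_{\text{plaque}}$. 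Adapting the operator-valued Weyl calculus of \cite{Bal}, I would build a local parametrix for $(z-\bar{\bar{L}}_\varepsilon)^{-1}$, insert it into the Helffer--Sj\"ostrand integral, and apply standard stationary phase in $(y,\eta)$ to derive, for any $\chi\in C^\infty_c(U)$, the local estimate
\[
\tr(\chi f(\bar{\bar{L}}_\varepsilon))=\tfrac{1}{(2\pi\varepsilon)^q}\int_{T^*\RR^q_y}\tr_x\bigl(\chi(y,\cdot)\,f(\sigma(\Delta_\varepsilon)(y,\eta))\bigr)\,dy\,d\eta+o(\varepsilon^{-q}).
\]

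\textbf{Step 3 (globalization).} Fix a locally finite foliated atlas with subordinate partition of unity $\{\chi_\alpha\}$ and sum the estimates of Step 2. Since the semiclassical kernel of $f(\bar{\bar{L}}_\varepsilon)$ decays rapidly outside a transverse $\varepsilon$-neighbourhood of the diagonal, each $\tr(\chi_\alpha f(\bar{\bar{L}}_\varepsilon))$ is genuinely localized in $U_\alpha$. Summing over $\alpha$ produces $\tr f(\bar{\bar{L}}_\varepsilon)$ on the left, and the total main term is identified with $(2\pi\varepsilon)^{-q}\tr_{\cF_N}f(\sigma(\Delta_\varepsilon))$: in each foliated chart the canonical transverse Liouville measure of $\cF_N$ restricts to $dy\,d\eta$, while by construction $\tr_{\cF_N}$ of a regularizing element of $C^*(N^*\cF,\cF_N,\pi^*\Lambda T^*M)$ integrates the leafwise trace of its diagonal kernel against this transverse measure.

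\textbf{Main obstacle.} The decisive difficulty will be that leaves are typically non-compact and $\Delta_F$ has continuous leafwise spectrum, so $f(\sigma(\Delta_\varepsilon)(y,\eta))$ is \emph{not} trace class on the leaf; a well-defined trace exists only as an element of the foliation $C^*$-algebra through $\tr_{\cF_N}$. Consequently one cannot apply Step 2 literally plaque by plaque: $f(\bar{\bar{L}}_\varepsilon)$ itself must be constructed as an element of a smooth convolution subalgebra of $C^*(M,\cF)$ on which the foliation trace is defined, and the semiclassical remainders must be controlled in that algebra rather than merely in operator norm. I expect this to force a parametrix with Schwartz kernels supported near the diagonal of the holonomy groupoid, together with heat-kernel estimates uniform in $\varepsilon$ for the adiabatic metric $g_\varepsilon$. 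Extending even the definition of $\sigma(\Delta_\varepsilon)$ beyond the Riemannian/$K_1=0$ setting, so that the conjecture can be formulated for arbitrary foliations, is a separate, essentially independent problem.
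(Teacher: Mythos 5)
The statement you are trying to prove is explicitly labelled a \emph{conjecture} in the paper, and the paper in fact presents a \emph{counterexample} to it: in Section~\ref{s:nonRiem} the Sol-manifold computation (Theorem~\ref{t:mainsolmanifolds}) combined with the evaluation of $\operatorname{tr}_{{\mathcal F}_N}e^{-t\sigma(\Delta_\varepsilon)}$ shows that for $\alpha\neq 0$ one has
$\operatorname{tr} e^{-t\Delta_\varepsilon}=\tfrac{3}{8\pi^2\varepsilon^{2}}\,\Gamma(\tfrac32)\,t^{-3/2}+o(\varepsilon^{-2})$,
which is independent of $\alpha$, whereas the conjectural main term
$\tfrac{1}{(2\pi\varepsilon)^2}\operatorname{tr}_{{\mathcal F}_N}e^{-t\sigma(\Delta_\varepsilon)}$ depends on $\alpha$; the paper concludes that the formula \eqref{e:ncWeyl} ``does not hold'' there. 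Since $K_1=0$ in the Sol case, that example falls squarely under the hypotheses of your proposal, so a global argument along your lines cannot be correct. What the paper actually does is prove the formula only for Riemannian foliations (Theorem~\ref{ad:main}, from \cite{adiab}), and then in the non-Riemannian setting it does not ``prove'' the conjecture: it verifies compatibility case-by-case, finding that it matches the known result for Heisenberg manifolds (Theorem~\ref{t:nil}) but fails for Sol-manifolds.

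The place your sketch goes wrong is Step~2. The operator-valued semiclassical calculus of Bal (and the stationary phase estimate it feeds into) is a \emph{local-in-the-base} tool; your application of it assumes that, chart by chart, $\bar{\bar L}_\varepsilon$ is governed by the plaquewise symbol $g^N(\eta,\eta)+\Delta_F|_{\mathrm{plaque}}$ and that the remainders can be controlled uniformly. This implicitly uses the key structural feature the paper singles out: \emph{for Riemannian foliations the terms $g^N(\eta,\eta)$ and $\Delta_{{\mathcal F}_N}$ in \eqref{e:principal} commute}, which is exactly what a bundle-like metric delivers (the transverse symbol is holonomy invariant along the leaf, so it does not interact with the leafwise operator). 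Without that, the linear holonomy maps $dh_\gamma^*$ transport the covector $\eta$ nontrivially along the leaves; in the Sol case they do so with exponential distortion, and the plaque-local parametrix does not glue into a correct global one — the error terms are not $o(\varepsilon^{-q})$. Your ``main obstacle'' paragraph correctly flags non-compactness of leaves and continuous leafwise spectrum, but misses this more decisive obstruction: the noncommutativity between the transverse symbol and the longitudinal operator, and the resulting failure of the naive operator-valued Weyl calculus to see the correct leading term. This is precisely why the statement remains a conjecture rather than a theorem.
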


It is a remarkable fact that such a noncommutative Weyl formula was
rigorously proved for Riemannian foliations in \cite{adiab} (see
also \cite{asymp}). We will explain the details and give two
concrete examples in Section~\ref{s;Riem}.

It is very little known about the problem in question for
non-Riemannian foliations. There are only two computations in some
particular cases \cite{matsb,MZ,SMZ}. In all these cases, we have
$K_1=0$, so it is natural to ask whether the noncommutative Weyl
formula \eqref{e:ncWeyl} holds. We show that the formula obtained in
\cite{matsb} can be written in the form \eqref{e:ncWeyl}, while the
formula obtained in \cite{MZ,SMZ} seems to be not compatible with
the formula \eqref{e:ncWeyl}. These are main new results of the
paper. They will be discussed in Section~\ref{s:nonRiem}.

\section{Riemannian foliations}\label{s;Riem} \subsection{General results}
Recall (see, for instance, \cite{Molino}) that a foliation
${\mathcal F}$ is called Riemannian, if there exists a Riemannian
metric $g$ on $M$ such that the induced metric $g_\tau$ on the
normal bundle $\tau=TM/F$ is holonomy invariant, or, equivalently,
in any foliated chart $\phi:U\to I^p\times I^q$ with local
coordinates $(x,y)$, the restriction $g_H$ of $g$ to $H=F^{\bot}$ is
written in the form $$ g_H=\sum_{\alpha,\beta=1}^q
g_{\alpha\beta}(y)\theta^{\alpha} \theta^{\beta}, $$ where
$\theta^{\alpha}\in H^*$ is the 1-form, corresponding to the form
$dy^\alpha$ under the isomorphism $H^*\cong T^*{\RR}^q$, and
$g_{\alpha\beta}(y)$ depend only on the transverse variables $y\in
\RR^q$. Such a Riemannian metric is called bundle-like.

Suppose that ${\mathcal F}$ is a Riemannian foliation and $g$ is a
bundle-like metric. Then the key observation is that, in this case,
the first order differential operator $K_1$ is a {\em leafwise}
differential operator. Using this fact, one can show that we can
reduce our considerations to the operator
\[
\bar{\bar{L}}_\varepsilon=\Delta_F + \varepsilon^2\Delta_H.
\]
More precisely, for any $f\in \cS(\RR)$, we have (with some $C>0$):
\[
|\tr f(L_\varepsilon)-\tr f(\bar{\bar{L}}_\varepsilon)|<
C\varepsilon^{1-q}, \quad 0<\varepsilon\leq 1.
\]

Now we explain how the asymptotic formula for the trace of
$f(\Delta_\varepsilon)$ in the adiabatic limit can be written in the
form \eqref{e:ncWeyl}. We start with a definition of the principal
symbol.

Denote by $\pi :N^*\cF\to M$ the conormal bundle to ${\cF}$. It is a
well-known fact in foliation theory (cf., for instance,
\cite{Molino,survey}) that $N^*\cF$ carries a natural foliation
${\cF}_N$, which we will call the linearized foliation. The leaves
of $\cF_N$ can be described as follows. First of all, observe that,
for any piecewise smooth leafwise path $\gamma : [0,1]\to M$ such
that $\gamma(0)=x$ and $\gamma(1)=y$, sliding along the leaves of
the foliation determines a linear map $dh^*_{\gamma}: N^*_y{\cF}\to
N^*_x{\cF}$, called the linear holonomy map. Then the leaf through
$\nu\in N^*{\cF}$ consists of all $dh_{\gamma}^{*}(\nu)\in N^*{\cF}$
with $\gamma : [0,1]\to M$ such that $\gamma(1)=\pi(\nu)$.

The leaves of ${\cF}_N$ are transverse to the fibers of $\pi$, and
the restriction of $\pi$ to any leaf of ${\cF}_N$ is a covering over
a leaf of $\cF$. Therefore, the tangential Laplacian $\Delta_F$ can
be lifted to a tangentially elliptic (relative to ${\mathcal F}_N$)
operator $\Delta_{{\mathcal F}_N}$ in $C^{\infty}(N^*{\mathcal
F},\pi^{*} \Lambda T^{*}M)$. Let $g^N \in C^\infty(N^*\cF)$ be the
fiberwise Riemannian metric on $N^*{\mathcal F}$ induced by the
metric on $M$. Denote by $g^N$ the multiplication operator in
$C^{\infty}(N^*{\mathcal F},\pi^{*} \Lambda T^{*}M)$ by $g^N$. The
principal symbol of $\Delta_\varepsilon$ is a tangentially elliptic
operator in $C^{\infty}(N^*{\mathcal F},\pi^{*} \Lambda T^{*}M)$
given by (cf. \eqref{e:principal-op})
\begin{equation}\label{e:principal}
\sigma(\Delta_\varepsilon) = g^N(\eta,\eta) + \Delta_{{\mathcal
F}_N}.
\end{equation}

To define the trace of $f(\sigma(\Delta_\varepsilon))$, we will use
noncommutative integration theory developed by A. Connes in
\cite{Co79}.

Let $G$ be the holonomy groupoid of $\cF$. Let us briefly recall its
definition. Denote by $\sim_h$ the equivalence relation on the set
of piecewise smooth leafwise paths $\gamma:[0,1] \rightarrow M$,
setting $\gamma_1\sim_h \gamma_2$ if $\gamma_1$ and $\gamma_2$ have
the same initial and final points and the same holonomy maps. The
holonomy groupoid $G$ is the set of $\sim_h$ equivalence classes of
leafwise paths. The set of units of $G$ is $G^{(0)}=M$. $G$ is
equipped with the source and the range maps $s,r:G\rightarrow M$
defined by $s(\gamma)=\gamma(0)$ and $r(\gamma)=\gamma(1)$. The
multiplication $\gamma_1\gamma_2\in G$ of $\gamma_1\in G$ and
$\gamma_2\in G$ is defined by the concatenation of paths. It is
defined if and only if $s(\gamma_1)=r(\gamma_2)$. Recall also that,
for any $x\in M$, the set $G^x=\{\gamma\in G : r(\gamma)=x\}$ is the
covering of the leaf $L_x$ through the point $x$, associated with
the holonomy group of the leaf. We will identify any $x\in
G^{(0)}=M$ with the element of $G$ given by the constant path
$\gamma(t)=x, t\in [0,1]$.

The holonomy groupoid $G_{{\mathcal F}_N}$ of the linearized
foliation ${\mathcal F}_N$ can be described as the set of all
$(\gamma,\nu)\in G\times {N}^*{\mathcal F}$ such that
$r(\gamma)=\pi(\nu)$. The source map $s_N:G_{{\mathcal
F}_N}\rightarrow {N}^*{\mathcal F}$ and the range map
$r_N:G_{{\mathcal F}_N}\rightarrow {N}^*{\mathcal F}$ are defined as
$s_N(\gamma,\nu)=dh_{\gamma}^{*}(\nu)$ and $r_N(\gamma,\nu)=\nu$.
The product of $(\gamma_1,\nu_1)\in G_{{\mathcal F}_N}$ and
$(\gamma_2,\nu_2)\in G_{{\mathcal F}_N}$ is defined if
$\nu_2=dh_{\gamma_1}^*(\nu_1)$, and, under this condition, it is
given by
$(\gamma_1,\nu_1)(\gamma_2,dh_{\gamma_1}^*(\nu_1))=(\gamma_1\gamma_2,\nu_1)$.

Denote by ${\mathcal L}(\pi^*\Lambda T^{*}M)$ the vector bundle on
$G_{\cF_N}$, whose fiber at a point $(\gamma,\nu)\in G_{\cF_N}$ is
the space of linear maps $ (\pi^*\Lambda T^{*}M)_{s_N(\gamma,\nu)}
\to (\pi^*\Lambda T^{*}M)_{r_N(\gamma,\nu)}$. There is a standard
way (due to Connes \cite{Co79}) to introduce the structure of
involutive algebra on the space $C^{\infty}_{c}(G_{{\mathcal F}_N},
{\mathcal L}(\pi^*\Lambda T^{*}M))$ of smooth, compactly supported
sections of ${\mathcal L}(\pi^*\Lambda T^{*}M)$. For any $\nu \in
N^*\cF$, this algebra has a natural representation $R_\nu$ in the
Hilbert space $L^2(G^\nu_{\cF_N}, s^*_N(\pi^*\Lambda T^{*}M))$ that
determines its embedding to the $C^*$-algebra of all bounded
operators in $L^2(G_{\cF_N}, s^*_N(\pi^*\Lambda T^{*}M))$.

Let $\lambda_L$ denote the Riemannian volume form on a leaf $L$
given by the induced metric, and, for any $x\in M$, let
$\lambda^{x}$ denote the lift of $\lambda_{L_x}$ via the holonomy
covering map $s:G^x\to L_x$. For any $k\in
C^{\infty}_{c}(G_{{\mathcal F}_N}, {\mathcal L}(\pi^*\Lambda
T^{*}M))$, the action of $R_\nu(k)$ on $\zeta\in L^2(G^\nu_{\cF_N},
s^*_N(\pi^*\Lambda T^{*}M))$ is given by
\begin{equation}\label{e:Rnu}
R_\nu(k)\zeta(\gamma,\nu)=\int_{G^x}
k((\gamma,\nu)^{-1}(\gamma_1,\nu))
\zeta(\gamma_1,\nu)d\lambda^x(\gamma_1), \quad r(\gamma)=x.
\end{equation}
Taking the closure of the image of this embedding, we get a
$C^*$-algebra, called the twisted foliation $C^*$-algebra and
denoted by $C^*(N^*\cF, {\mathcal F}_N,\pi^{*}\Lambda T^{*}M)$.

The foliation $\cF_N$ has a natural transverse symplectic
structure, which can be described as follows. Consider a foliated
chart $\varkappa: U\subset M\rightarrow I^p\times I^q$ on $M$ with
coordinates $(x,y)\in I^p\times I^q$ ($I$ is the open interval
$(0,1)$) such that the restriction of $\cF$ to $U$ is given by the
sets $y={\rm const}$. One has the corresponding coordinate chart
in $T^*M$ with coordinates denoted by $(x,y,\xi,\eta)\in I^p\times
I^q\times \RR^p\times \RR^q$. In these coordinates, the
restriction of the conormal bundle $N^*{\mathcal  F}$ to $U$ is
given by the equation $\xi=0$. So we have a coordinate chart
$\varkappa_n : U_1\subset N^*{\mathcal  F} \longrightarrow
I^p\times I^q\times \RR^q$ on $N^*{\mathcal  F}$ with the
coordinates $(x,y,\eta)\in I^p\times I^q\times \RR^q$. Indeed, the
coordinate chart $\varkappa_n$ is a foliated coordinate chart for
${\mathcal  F}_N$, and the restriction of ${\mathcal F}_N$ to
$U_1$ is given by the level sets $y= {\rm const}, \eta={\rm
const}$. The transverse symplectic structure for $\cF_N$ is given
by the transverse two-form $\sum_jdy_j\wedge d\eta_j$.

The corresponding canonical transverse Liouville measure $dy\,d\eta$
is holonomy invariant and, by noncommutative integration theory
\cite{Co79}, defines the trace $\operatorname{tr}_{{\mathcal F}_N}$
on the $C^*$-algebra $C^*(N^*\cF, {\mathcal F}_N,\pi^{*}\Lambda
T^{*}M)$. Combining the Riemannian volume forms along the leaves of
$\cF_N$ and the transverse Liouville measure $dy\,d\eta$, we get a
volume form $d\nu$ on $N^*\cF$. For any $k\in
C^{\infty}_{c}(G_{{\mathcal F}_N}, {\mathcal L}(\pi^*\Lambda
T^{*}M))$, its trace is given by the formula
\begin{equation}\label{e:trfn}
\operatorname{tr}_{{\mathcal F}_N}(k)=\int_{N^*\cF}k(\nu)d\nu.
\end{equation}

Consider $\sigma(\Delta_\varepsilon)$ as a family of elliptic
operators along the leaves of the foliation ${\mathcal F}_N$ and
lift these operators to the holonomy coverings of the leaves. For
any $\nu\in N^*\cF$, we get a formally self-adjoint uniformly
elliptic operator $\sigma(\Delta_\varepsilon)_\nu$ in
$C^\infty(G^\nu_{\cF_N}, s^*_N(\pi^*\Lambda T^{*}M))$, which is
essentially self-adjoint in the Hilbert space $L^2(G^\nu_{\cF_N},
s^*_N(\pi^*\Lambda T^{*}M))$. For any $f\in S({\mathbb R})$, the
family $\{f(\sigma(\Delta_\varepsilon)_\nu), \nu\in N^*\cF\}$
defines an element $f(\sigma(\Delta_\varepsilon))$ of the
$C^*$-algebra $C^*(N^*\cF, {\mathcal F}_N,\pi^{*}\Lambda T^{*}M)$:
\[
f(\sigma(\Delta_\varepsilon)_\nu)=
R_\nu(f(\sigma(\Delta_\varepsilon))), \quad \nu \in N^*\cF.
\]
One can show that
\[
\operatorname{tr}_{{\mathcal F}_N}
f(\sigma(\Delta_\varepsilon))<\infty.
\]

\begin{theorem}[\cite{adiab}]
\label{ad:main} Let $(M,{\mathcal F})$ be a Riemannian foliation
equipped with a bundle-like Riemannian metric $g$. For any $f\in
S({\mathbb R})$, the asymptotic formula holds:
\begin{equation}\label{e:adiab}
\operatorname{tr} f(\Delta_{\varepsilon})
=\frac{1}{(2\pi\varepsilon)^{q}} \operatorname{tr}_{{\mathcal F}_N}
f(\sigma(\Delta_\varepsilon)) +o(\varepsilon^{-q}),\quad
\varepsilon\rightarrow 0.
\end{equation}
\end{theorem}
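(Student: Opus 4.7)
The plan is to split the proof into a reduction step and a semiclassical analysis step. By the discussion preceding the statement, after transferring $\Delta_\varepsilon$ to $L_\varepsilon$ via the isometry $\Theta_\varepsilon$, successive applications of the bounds $|\operatorname{tr}f(L_\varepsilon)-\operatorname{tr}f(\bar{L}_\varepsilon)|=O(\varepsilon^{1-q})$ reduce the problem to computing the asymptotics of $\operatorname{tr}f(\bar{\bar{L}}_\varepsilon)$ for $\bar{\bar{L}}_\varepsilon=\Delta_F+\varepsilon^2\Delta_H$. The crucial structural input is the bundle-like assumption: since $g_H$ depends only on the transverse variable $y$ in a foliated chart, $K_1$ is a \emph{leafwise} first order operator, which is what allows the further reduction from $\bar{L}_\varepsilon$ to $\bar{\bar{L}}_\varepsilon$. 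So the whole task becomes proving \eqref{e:adiab} with $\Delta_\varepsilon$ replaced by $\bar{\bar{L}}_\varepsilon$.

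For the asymptotic analysis I would set up an operator-valued semiclassical calculus adapted to the foliation. In a foliated chart $\varkappa:U\to I^p\times I^q$, the operator $\bar{\bar{L}}_\varepsilon$ is a semiclassical differential operator in $y$ with symbol taking values in leafwise differential operators in $x$, whose principal part at $(y,\eta)$ is exactly $\sigma(\Delta_\varepsilon)(y,\eta)=\Delta_F+g^N(\eta,\eta)$. To handle general $f\in\cS(\RR)$ I would apply the Helffer-Sjöstrand formula
\[
f(\bar{\bar{L}}_\varepsilon)=-\frac{1}{\pi}\int_\CC \bar\partial \tilde f(z)\,(\bar{\bar{L}}_\varepsilon-z)^{-1}\,dA(z),
\]
with $\tilde f$ a compactly supported almost-analytic extension, and construct a parametrix for the resolvent $(\bar{\bar{L}}_\varepsilon-z)^{-1}$ as an operator-valued semiclassical pseudodifferential operator whose principal symbol is the leafwise resolvent $(\sigma(\Delta_\varepsilon)(\nu)-z)^{-1}$, with lower order corrections produced by the symbolic calculus. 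Taking the trace in a foliated chart gives, as in \eqref{e:Weyl-op}, the leading term $(2\pi\varepsilon)^{-q}\int \operatorname{tr} a(y,\eta)\,dy\,d\eta$ plus $o(\varepsilon^{-q})$. Gluing by a partition of unity, and integrating against $\bar\partial\tilde f$ to pass from the resolvent back to $f$, identifies the leading coefficient with the integral over $N^*\cF$ of the fibrewise trace of $f(\sigma(\Delta_\varepsilon))(\nu)$ against the Liouville-times-leaf-volume form $d\nu$, which by \eqref{e:trfn} is exactly $\operatorname{tr}_{\cF_N}f(\sigma(\Delta_\varepsilon))$.

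The main obstacles are: (i) building a parametrix for $(\bar{\bar{L}}_\varepsilon-z)^{-1}$ with uniform control of the dependence on $\operatorname{Im}z$, sharp enough that integration against $\bar\partial\tilde f$ (whose decay in $|\operatorname{Im}z|$ is the only remaining gain) yields a genuine $o(\varepsilon^{-q})$ remainder; (ii) making sense of the local operator-valued symbols as sections of $\cL(\pi^*\Lambda T^*M)$ on $G_{\cF_N}$ so that they assemble into honest elements of the twisted foliation $C^*$-algebra $C^*(N^*\cF,\cF_N,\pi^*\Lambda T^*M)$; and (iii) checking that the patching of the chartwise leading integrals is coordinate-independent, which is exactly where the holonomy invariance of the transverse Liouville measure $dy\,d\eta$ and Connes' noncommutative integration theory enter. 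Once these technicalities are in place, the leading coefficient is expressed intrinsically as the single noncommutative trace appearing on the right-hand side of \eqref{e:adiab}.
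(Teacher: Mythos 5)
Your reduction step (transfer via $\Theta_\varepsilon$, elimination of $K_2$, $K_3$, $\Delta_\theta$, then of $K_1$ using the bundle-like hypothesis) matches the paper's preliminary discussion.

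The main asymptotic step is where your proposal has a genuine gap. You build your argument on an ``operator-valued semiclassical calculus adapted to the foliation'' defined in foliated charts, but no such calculus is available for a general Riemannian foliation. In a chart the operator-valued symbol can only act on plaques, whereas $\sigma(\Delta_\varepsilon)_\nu$ acts on the holonomy cover $G^\nu_{\cF_N}$ of the entire leaf, which is generically non-compact and may even be dense in $M$; $f$ of a plaque operator and $f$ of the leafwise operator are entirely different objects, and no partition of unity converts one into the other or assembles chartwise resolvent parametrices into a global kernel on $G_{\cF_N}$. The paper flags exactly this obstruction (``there may be no base manifold, leaves may be non-compact, and elliptic operators along leaves have, in general, continuous spectrum''), and in Section~\ref{s:Riem-sub} it says the operator-valued pseudodifferential route \cite{Bal,Moroianu04a,Moroianu04b} is expected to succeed only in the Riemannian \emph{submersion} case, where the leaf space is a manifold and the fibers are compact. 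So your item (ii) is not a technicality to be postponed; it is the central difficulty, and your proposal offers no idea for overcoming it outside the fibration setting.

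You also omit the structural ingredient the paper explicitly calls ``a very important feature of the case of Riemannian foliation'': the two terms $g^N(\eta,\eta)$ and $\Delta_{\cF_N}$ in \eqref{e:principal} commute. This commutativity is what allows $f(\sigma(\Delta_\varepsilon))$ to be analyzed through the leafwise spectral measure of $\Delta_F$, and it is what ties Theorem~\ref{ad:main} to its equivalent reformulation Theorem~\ref{intr} in terms of the spectrum distribution function $N_\cF$ of the leafwise Laplacian: one integrates out the transverse covariable $\eta$ against the spectral measure of $\Delta_{\cF_N}$, which is legitimate precisely because the two summands in the principal symbol commute. Without invoking this, a chartwise leading-order expression cannot be identified with the single noncommutative trace $\tr_{\cF_N}f(\sigma(\Delta_\varepsilon))$ appearing in \eqref{e:adiab}.
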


It should be noted that the formula \eqref{e:adiab} (or, equally,
\eqref{e:ncWeyl}) makes sense for an arbitrary foliation $\cF$, not
necessarily Riemannian. The only difference is that in general the
groupoid $G_{\cF_N}$ does not coincide with the holonomy groupoid of
$\cF_N$, but we can use it to write the formula. A very important
feature of the case of Riemannian foliation is the fact that, in
this case, the terms $g^N(\eta,\eta)$ and $\Delta_{{\mathcal F}_N}$
in~\eqref{e:principal} commute.

The formula (\ref{e:adiab}) can be rewritten in terms of the
spectral data of leafwise Laplace operators. We will formulate the
corresponding result for the spectrum distribution function
\[
N_\varepsilon(\lambda)=\sharp \{i:\lambda_i(\varepsilon)\leq
\lambda\}.
\]

Restricting the tangential Laplace operator $\Delta_F$ to the leaves
of ${\mathcal F}$ and lifting the restrictions to the holonomy
coverings of leaves, we get the Laplacian $\Delta_x$ acting in
$C^{\infty}_c(G^x,s^{*}\Lambda T^{*}M)$. Using the assumption that
${\mathcal F}$ is Riemannian, it can be checked that, for any $x\in
M$, $\Delta_x$ is formally self-adjoint in $L^2(G^x,s^{*}\Lambda
T^{*}M)$, that, in turn, implies its essential self-adjointness in
this Hilbert space (with initial domain
$C^{\infty}_c(G^x,s^{*}\Lambda T^{*}M)$). For each $\lambda \in
{\mathbb R}$, let $E_x(\lambda)$ be the spectral projection of
$\Delta_x$, corresponding to the semi-axis $(-\infty ,\lambda ]$.
The Schwartz kernels of the operators $E_x(\lambda)$ define a
leafwise smooth section $e_\lambda$ of the bundle $\cL(\Lambda
T^{*}M)$ over $G$.

We introduce the spectrum distribution function $N_{\mathcal
F}(\lambda)$ of the operator $\Delta_F$ by
\[
N_{\mathcal F}(\lambda)=\int_M \Tr e_\lambda(x)\,dx,\quad
\lambda\in {\mathbb R},
\]
where $dx$ denotes the Riemannian volume form on $M$. By
\cite{tang}, for any $\lambda  \in {\mathbb R}$, the function $\Tr
e_\lambda $ is a bounded measurable function on $M$, therefore,
the spectrum distribution function $N_{\mathcal F}(\lambda)$ is
well-defined and takes finite values.

As above, one can show that the family $\{E_x(\lambda): x\in M\}$
defines an element $E(\lambda)$ of the twisted von Neumann
foliation algebra $W^{*}(G,\Lambda T^{*}M)$, the holonomy
invariant transverse Riemannian volume form for $\cF$ defines a
trace $\tr_\cF$ on $W^{*}(G,\Lambda T^{*}M)$, and the right hand
side of the last formula can be interpreted as the value of this
trace on $E(\lambda)$.

\begin{theorem}[\cite{adiab}]
\label{intr} Let $(M,{\mathcal F})$ be a Riemannian foliation,
equipped with a bundle-like Riemannian metric $g$. Then we have
\[
N_{\varepsilon}(\lambda ) =\varepsilon^{-q}
\frac{(4\pi)^{-q/2}}{\Gamma((q/2)+1)} \int_{-\infty}^{\lambda}
(\lambda - \tau )^{q/2}d_{\tau}N_{\mathcal F}(\tau
)+o(\varepsilon^{-q}),\quad \varepsilon\rightarrow 0.
\]
\end{theorem}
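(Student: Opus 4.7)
The plan is to deduce Theorem \ref{intr} from Theorem \ref{ad:main} by explicitly evaluating the noncommutative trace in \eqref{e:adiab} when $f=\chi_{(-\infty,\lambda]}$ in terms of the spectral data of $\Delta_F$, and then passing from Schwartz test functions to the discontinuous characteristic function by a monotone sandwich argument.

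The decisive structural input is the commutativity in \eqref{e:principal}. Since $g$ is bundle-like, linear holonomy acts by isometries on $N^*\cF$, so the fiberwise norm $g^N(\nu,\nu)$ is constant along each leaf of $\cF_N$. Hence on a single leaf the multiplication operator $g^N(\eta,\eta)$ is a scalar that commutes with $\Delta_{\cF_N}$, and for any Borel $f$ one has $f(\sigma(\Delta_\varepsilon))_\nu = f_c(\Delta_{\cF_N,\nu})$ with $c = g^N(\nu,\nu)$ and $f_c(t) = f(c+t)$. Moreover $G^\nu_{\cF_N}\cong G^{\pi(\nu)}$ and, under this identification, $\Delta_{\cF_N,\nu}$ is conjugate to $\Delta_{\pi(\nu)}$ (the Riemannian structure being pulled back via the covering $\pi|_{L_\nu}$). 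Consequently, the value on the diagonal of the Schwartz kernel of $f_c(\Delta_{\cF_N,\nu})$ equals $\int f(c+\tau)\,d_\tau\Tr e_\tau(\pi(\nu))$.

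Combining this with \eqref{e:trfn} and the splitting $d\nu = dx\,d\eta$ in a foliated chart yields
\[
\operatorname{tr}_{\cF_N} f(\sigma(\Delta_\varepsilon))
= \int_M\int_{N^*_x\cF}\int f\bigl(\|\eta\|_x^2+\tau\bigr)\,d_\tau\Tr e_\tau(x)\,d\eta\,dx.
\]
For $f=\chi_{(-\infty,\lambda]}$ the inner $(\tau,\eta)$-integrand reduces to $\chi_{\{\|\eta\|_x^2\le \lambda-\tau\}}$. Converting $d\eta$ to polar coordinates and using the surface area $2\pi^{q/2}/\Gamma(q/2)$ of $S^{q-1}$, the fiber integral produces $\tfrac{\pi^{q/2}}{\Gamma(q/2+1)}(\lambda-\tau)_+^{q/2}$. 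Integrating against $d_\tau\Tr e_\tau(x)\,dx = d_\tau N_\cF(\tau)$, dividing by $(2\pi\varepsilon)^q$ as in \eqref{e:adiab}, and using $\pi^{q/2}(2\pi)^{-q}=(4\pi)^{-q/2}$, reproduces precisely the leading term of Theorem \ref{intr}.

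The remaining and most delicate step is to justify the use of the non-Schwartz function $\chi_{(-\infty,\lambda]}$ in \eqref{e:adiab}. The plan is a standard sandwich: pick $f^-_\delta\le \chi_{(-\infty,\lambda]}\le f^+_\delta$ in $\cS(\RR)$ that agree with $\chi_{(-\infty,\lambda]}$ outside $[\lambda-\delta,\lambda+\delta]$. Monotonicity of the functional calculus gives $\operatorname{tr} f^-_\delta(\Delta_\varepsilon)\le N_\varepsilon(\lambda)\le \operatorname{tr} f^+_\delta(\Delta_\varepsilon)$, while applying \eqref{e:adiab} to $f^\pm_\delta$ together with the explicit formula from the previous paragraph (which is continuous in $\lambda$ because $(\lambda-\tau)^{q/2}$ is continuous for $q\ge 1$) sandwiches $\varepsilon^{q}N_\varepsilon(\lambda)$ between quantities that converge, first as $\varepsilon\to 0$ and then as $\delta\to 0$, to the common value $(4\pi)^{-q/2}\Gamma(q/2+1)^{-1}\int_{-\infty}^\lambda(\lambda-\tau)^{q/2}d_\tau N_\cF(\tau)$. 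The main obstacle is precisely this Tauberian step: since the $o(\varepsilon^{-q})$ remainder in \eqref{e:adiab} depends on $f^\pm_\delta$, one must take the limits in the correct order and rely on monotonicity (together with the a priori bound $\operatorname{tr} f(\Delta_\varepsilon)=O(\varepsilon^{-q})$ recalled in the introduction) to upgrade the Schwartz-function asymptotic to a two-sided $o(\varepsilon^{-q})$ asymptotic for the characteristic function.
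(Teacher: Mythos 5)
Your derivation of Theorem \ref{intr} from Theorem \ref{ad:main} is sound and follows exactly the route the paper alludes to when it says \eqref{e:adiab} ``can be rewritten in terms of the spectral data of leafwise Laplace operators'': the bundle-like hypothesis makes linear holonomy an isometry of $(N^*\cF,g^N)$, so $g^N(\nu,\nu)$ is a leafwise constant and the two summands in \eqref{e:principal} commute; the identification $G^\nu_{\cF_N}\cong G^{\pi(\nu)}$ lets you write the diagonal kernel of $f(\sigma(\Delta_\varepsilon))_\nu$ as $\int f(\|\eta\|^2_x+\tau)\,d_\tau\Tr e_\tau(\pi(\nu))$; and integrating the $q$-ball volume $\frac{\pi^{q/2}}{\Gamma(q/2+1)}(\lambda-\tau)_+^{q/2}$ over the fiber, together with $\pi^{q/2}(2\pi)^{-q}=(4\pi)^{-q/2}$, reproduces the stated constant.

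One small repair is needed in the Tauberian step as you wrote it: there is no $f^+_\delta\in\cS(\RR)$ with $f^+_\delta\ge\chi_{(-\infty,\lambda]}$ everywhere and agreeing with it off $[\lambda-\delta,\lambda+\delta]$, since that would force $f^+_\delta\equiv 1$ on the unbounded ray $(-\infty,\lambda-\delta]$. The fix is standard: $\Delta_\varepsilon$ and each $\sigma(\Delta_\varepsilon)_\nu$ are nonnegative, so it suffices to sandwich $\chi_{[0,\lambda]}$ on $[0,\infty)$ by functions $f^\pm_\delta\in C^\infty_c(\RR)$ supported in a fixed interval containing $[0,\lambda+1]$; the operator traces on both sides only see $t\ge 0$. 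With that adjustment, monotonicity of the functional calculus, the a priori bound $\operatorname{tr}f(\Delta_\varepsilon)=O(\varepsilon^{-q})$, your order of limits (first $\varepsilon\to 0$ for fixed $\delta$, then $\delta\to 0$), and the continuity of $\lambda\mapsto\int(\lambda-\tau)_+^{q/2}\,d_\tau N_\cF(\tau)$ for $q\ge 1$ do close the argument exactly as you describe.
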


\subsection{Riemannian submersions}\label{s:Riem-sub}
Suppose that the foliation $\cF$ is given by the fibers of a
fibration $p: M\to B$ over a compact manifold $B$. Then, for any
$x\in M$, $N^*_x\cF$ coincides with the image of the cotangent map
$dp(x)^* : T_{p(x)}^*B \to T_x^*M$. The inverse maps $(dp(x)^*)^{-1}
: N_x^*\cF \to T_{p(x)}^*B$ determine a fibration $N^*\cF \to T^*B$
whose fibers are the leaves of the linearized foliation $\cF_N$.
Thus, $N^*\cF$ is diffeomorphic to the fiber product
\[
M\times_B T^*B=\{(x,\eta)\in M\times T^*B : p(x)=y=\pi_B(y,\eta)\}
\]
with a diffeomorphism $M\times_B T^*B
\stackrel{\cong}{\longrightarrow} N^*\cF$, given by
\begin{equation}\label{e:nf}
(x,\eta)\in M\times_B T^*B\mapsto dp(x)^*(\eta)\in N^*_x\cF.
\end{equation}

A Riemannian metric $g_M$ on $M$ is bundle-like if and only if there
exists a Riemannian metric $g_B$ on $B$ such that, for any $x\in M$,
the restriction of the tangent map $dp(x) : T_xM\to T_{p(x)}B$ to
$H_x\subset T_xM$ induces an isometry from $(H_x, g_H)$ to
$(T_{p(x)}B, g_B)$, or, equivalently, $p : (M,g_M)\to (B,g_B)$ is a
Riemannian submersion.

The holonomy groupoid $G$ of $\cF$ is the fiber product
\[
M\times_BM=\{(x_1,x_2)\in M\times M : p(x_1)=p(x_2)\},
\]
where $s(x_1,x_2)=x_2, r(x_1,x_2)=x_1$. Similarly, the holonomy
groupoid $G_{\cF_N}$ is the fiber product
$N^*\cF\times_{T^*B}N^*\cF$, which consists of all
$(\nu_{x_1},\nu_{x_2})\in N^*_{x_1}\cF \times N^*_{x_2}\cF$ such
that $(x_1,x_2)\in M\times_BM$ and $(dp(x_1)^*)^{-1}(\nu_{x_1})
=(dp(x_2)^*)^{-1}(\nu_{x_2})$, with
$s_N(\nu_{x_1},\nu_{x_2})=\nu_{x_2},
r_N(\nu_{x_1},\nu_{x_2})=\nu_{x_1}$. On the other hand,
$N^*\cF\times_{T^*B}N^*\cF$ is also diffeomorphic to the fiber
product
\[
G\times_B T^*B = \{(x_1,x_2,\eta)\in M \times M\times T^*B :
p(x_1)=p(x_2)=y=\pi_B(y,\eta)\}.
\]
A diffeomorphism $G\times_B T^*B \stackrel{\cong}{\longrightarrow}
N^*\cF\times_{T^*B}N^*\cF$ can be defined as
\[
(x_1,x_2,\eta)\in G\times_B T^*B \mapsto
(dp(x_1)^*(\eta),dp(x_2)^*(\eta))\in N^*\cF\times_{T^*B}N^*\cF.
\]

For simplicity, we will consider scalar operators. For any
$(y,\eta)\in T^*B$, let $\Psi^{-\infty}((N^*\cF)_{(y,\eta)})$ be the
involutive algebra of all smoothing operators, acting on
$C^\infty((N^*\cF)_{(y,\eta)})$, where $(N^*\cF)_{(y,\eta)}$ is the
fiber of the fibration $N^*\cF\to T^*B$ at $(y,\eta)$. Consider a
field $\Psi^{-\infty}(N^*\cF)$ of involutive algebras on $T^*B$
whose fiber at $\eta\in T^*B$ is
$\Psi^{-\infty}((N^*\cF)_{(y,\eta)})$. For any section $K$ of the
field $\Psi^{-\infty}(N^*\cF)$, the Schwartz kernels of the
operators $K_{(y,\eta)}$ in $C^\infty((N^*\cF)_{(y,\eta)})$
determine a well-defined function $\sigma_K$ on $G_{{\mathcal
F}_N}\cong N^*\cF \times_{T^*B} N^*\cF$. We say that the section $K$
is smooth, if the corresponding function $\sigma_K$ is smooth. This
defines an involutive algebra isomorphism of
$C^\infty(B,\Psi^{-\infty}(N^*\cF))$ with $C^{\infty}(G_{{\mathcal
F}_N})$, where the structure of involutive algebra on
$C^{\infty}(G_{{\mathcal F}_N})$ is given by the fiberwise
composition and the fiberwise adjoint. Finally, for any smooth
section $K$ of $\Psi^{-\infty}(N^*\cF)$, its trace is given by the
formula
\[
\operatorname{tr}_{{\mathcal F}_N}(K)=\int_{T^*B}\operatorname{tr}
K_{(y,\eta)} dy\,d\eta.
\]

The principal symbol of the operator $\Delta_\varepsilon$ on
functions is a tangentially elliptic operator in
$C^{\infty}(N^*{\mathcal F})\cong C^\infty(M\times_B T^*B)$ given by
\[
\sigma(\Delta_\varepsilon) = g^{T^*B}+ \Delta_{M/B}\otimes
\operatorname{Id},
\]
where the vertical Laplace operator $\Delta_{M/B}$ is given by the
smooth family of Laplace operators along the fibers of $p$.

In this case, the operator $\Delta_\varepsilon$ can be represented,
at least, locally over the base, as a differential operator on the
base with operator-valued coefficients, and it looks likely that in
this case the formula \eqref{e:adiab} can be proved, using suitable
pseudodifferential calculus, for instance, pseudodifferential
operators with operator-valued coefficients \cite{Bal} or adiabatic
pseudodifferential calculus of Melrose
\cite{Moroianu04a,Moroianu04b}.

\subsection{Linear foliations on the torus}
Here we discuss the example of a linear foliation on the $2$-torus.
So consider the two-di\-men\-si\-o\-nal torus
$\mathbb{T}^2=\mathbb{R}^2/\mathbb{Z}^2$ with the coordinates
$(u,v)\in \mathbb{R}^2$, taken modulo integer translations, endowed
with the Euclidean metric $g=d u^2+d v^2$.

Let $\widetilde{X}$ be the vector field on $\mathbb{R}^2$ given by
\[
\widetilde{X}=\frac{\partial}{\partial u}+\alpha
\frac{\partial}{\partial v},
\]
where $\alpha\in \mathbb R$. Since $\widetilde{X}$ is translation
invariant, it determines a vector field $X$ on $\mathbb{T}^2$. The
orbits of $X$ define a one-dimensional foliation $\mathcal F$ on
$\mathbb{T}^2$. The leaves of $\mathcal F$ are the images of the
parallel lines $\widetilde{L}_{(u_0,v_0)}=\{(u_{0}+t, v_{0}+t\alpha)
:t\in\mathbb{R}\}$, parameterized by $(u_0,v_0)\in \mathbb R^2$,
under the projection $\mathbb{R}^2 \rightarrow \mathbb{T}^2$.

The Riemannian metric $g_{\varepsilon}$ on $\mathbb{T}^2$ defined by
(\ref{e:gh}) is given by
\[
g_\varepsilon=-\frac{1}{1+\alpha^2}\left(du+\alpha dv \right)^2
-\frac{\varepsilon^{-2}}{1+\alpha^2}\left( -\alpha du +dv \right)^2.
\]
The Laplace-Beltrami operator $\Delta_\varepsilon$ defined by
$g_\varepsilon$ has the form
 \[
\Delta_\varepsilon=-\frac{1}{1+\alpha^2}\left(
\frac{\partial}{\partial u}+\alpha\frac{\partial}{\partial v}
\right)^2 -\frac{\varepsilon^{2}}{1+\alpha^2}\left(
-\alpha\frac{\partial}{\partial u}+\frac{\partial}{\partial v}
\right)^2.
\]

\begin{rem}
The operator $\Delta_\varepsilon $ has the complete orthogonal
system of eigenfunctions $\{f_{kl}\in C^\infty({\mathbb{T}^2}) : (k,
l) \in \ZZ^2 \}$ given by
\[
f_{kl}(u,v)=e^{2\pi i(ku+lv)}, \quad (u,v)\in {\mathbb{T}^2},
\]
with the corresponding eigenvalues
\[
\lambda_{kl}(\varepsilon)= (2\pi)^2
\left(\frac{1}{1+\alpha^2}(k+\alpha
l)^2+\frac{\varepsilon^2}{1+\alpha^2}(-\alpha k+l)^2\right).
\]
The eigenvalue distribution function of $\Delta_\varepsilon$ has the
form
\[
N_\varepsilon(\lambda)= \# \{(k,l)\in {\mathbb Z}^2 : (2\pi)^2
\left(\frac{1}{1+\alpha^2}(k+\alpha
l)^2+\frac{\varepsilon^2}{1+\alpha^2}(-\alpha k+l)^2\right) <
\lambda\}.
 \]
Thus we see that $N_\varepsilon(\lambda)$ equals the number of
integer points in the ellipse
\[
\{(\xi,\eta)\in \mathbb{R}^2: (2\pi)^2
\left(\frac{1}{1+\alpha^2}(\xi+\alpha
\eta)^2+\frac{\varepsilon^2}{1+\alpha^2}(-\alpha
\xi+\eta)^2\right)<\lambda\}.
\]
This ellipse is centered at the origin. Its semi-axes are equal to
\[
a=\frac{(1+\alpha^2)\sqrt{\lambda}}{2\pi}, \quad
b=\frac{(1+\alpha^2)\sqrt{\lambda}}{2\pi\varepsilon}.
\]
Thus, $a$ is independent of $\varepsilon$, and $b\to \infty$ as
$\varepsilon\to 0$.

So we see that, in this case, our problem is related with some
lattice point distribution problems.
\end{rem}

In this case, by a direct calculation, one can show the following
result.

\begin{theorem}[\cite{torus-eng}]\label{th:main}
The following asymptotic formula for the eigenvalue distribution
function $N_\varepsilon(\lambda)$ of the operator
$\Delta_\varepsilon$ for a fixed $\lambda\in\mathbb{R}$ holds:
\medskip\par
1. For $\alpha\not\in\mathbb{Q},$
\begin{equation*}
    N_{\varepsilon}(\lambda ) =\frac{1}{4\pi}\varepsilon^{-1}\lambda+o(\varepsilon^{-1}), \quad
\varepsilon\rightarrow 0.
\end{equation*}

2. For $\alpha\in\mathbb{Q}$ of the form $\alpha=\frac{p}{q}$, where
$p$ and $q$ are coprime,
\begin{equation*}
    N_{\varepsilon}(\lambda )=\varepsilon^{-1}
\sum_{\substack{k\in {\mathbb Z}\\
|k|<\frac{\sqrt{\lambda}}{2\pi}\sqrt{p^2+q^2}}}
\frac{1}{\pi\sqrt{p^2+q^2} }(\lambda -
\frac{4\pi^2}{p^2+q^2}k^2)^{1/2}+o(\varepsilon^{-1}), \quad
\varepsilon\rightarrow 0.
\end{equation*}
\end{theorem}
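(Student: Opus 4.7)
My plan is to use the explicit form of the eigenvalues recorded in the preceding remark: $N_\varepsilon(\lambda)$ equals the number of integer points $(k,l)\in\ZZ^2$ lying in the ellipse
\[
E_\varepsilon=\{(k,l)\in\RR^2:(k+\alpha l)^2+\varepsilon^2(-\alpha k+l)^2<r^2\},\qquad r^2:=\tfrac{\lambda(1+\alpha^2)}{4\pi^2}.
\]
Diagonalizing the quadratic form shows that $E_\varepsilon$ has semi-axis $\sqrt{\lambda}/(2\pi)$ along the direction $(1,\alpha)$ and semi-axis $\sqrt{\lambda}/(2\pi\varepsilon)$ along the orthogonal direction $(-\alpha,1)$, and area $\lambda/(4\pi\varepsilon)$. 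The two cases of the theorem correspond to whether the long axis, of slope $-1/\alpha$, passes through nonzero integer lattice points.

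\emph{Rational case.} Set $\alpha=p/q$ with $\gcd(p,q)=1$ and change variables to $m=qk+pl$, $n=-pk+ql$. The defining inequality becomes $m^2+\varepsilon^2 n^2<R^2$ with $R^2:=\lambda(p^2+q^2)/(4\pi^2)$, and $(k,l)\mapsto(m,n)$ is a $\ZZ$-linear bijection of $\ZZ^2$ onto the sublattice $\Lambda'\subset\ZZ^2$ of index $p^2+q^2$ generated by $(q,-p)$ and $(p,q)$. By Bezout every $m\in\ZZ$ occurs as a first coordinate of some point of $\Lambda'$, and two such points differ by the shift $(0,-(p^2+q^2))$ coming from the primitive lattice vector $(p,-q)\in\ZZ^2$ with zero $m$-component; hence for fixed $m$ the admissible $n$'s form a single arithmetic progression of common difference $p^2+q^2$. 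For each $|m|<R$ the number of such $n$ with $\varepsilon|n|<\sqrt{R^2-m^2}$ equals $\tfrac{2\sqrt{R^2-m^2}}{\varepsilon(p^2+q^2)}+O(1)$; since the range $|m|<R$ is bounded independently of $\varepsilon$, summing over $m$ and substituting the value of $R$ produces the stated formula with error $O(1)=o(\varepsilon^{-1})$.

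\emph{Irrational case.} Now no nonzero integer point lies on the long axis, so the crude ``lattice points $=$ area $+$ $O$(perimeter)'' estimate fails: both terms are of order $\varepsilon^{-1}$. I would fix $l\in\ZZ$ and view the defining inequality as a quadratic in $k$, whose real solution set is an interval $I_l=(k_-(l),k_+(l))$; a direct computation gives that $I_l$ is nonempty only for $|l|\le L:=r\sqrt{1+\varepsilon^2\alpha^2}/(\varepsilon(1+\alpha^2))$ and that $k_\pm(l)=-\alpha l\pm r+f_\pm(\varepsilon l)$, where $f_\pm$ is a smooth function with $f_\pm'=O(\varepsilon)$ uniformly on the relevant range. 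Summing $|I_l|$ yields a Riemann sum converging to the area $\lambda/(4\pi\varepsilon)$, so it remains to control the rounding error $\sum_l(\#(I_l\cap\ZZ)-|I_l|)$, which depends only on $\{k_\pm(l)\}\bmod 1$. The main obstacle is to show this is $o(\varepsilon^{-1})$: I would partition the range $|l|\le L$ into blocks of length $\Delta\sim\varepsilon^{-1/2}$, on which $f_\pm$ varies by $o(1)$ and can be absorbed into a constant shift, and then invoke Weyl's equidistribution theorem for the sequence $\{-\alpha l\}_{l\in\ZZ}$ (using $\alpha\notin\QQ$) to get cancellation on each block of order $o(\Delta)$. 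Summing over the $O(L/\Delta)=O(\varepsilon^{-1/2})$ blocks yields a total error of $o(\varepsilon^{-1})$, as required.
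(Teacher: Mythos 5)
Your argument is the direct lattice-point count that the paper alludes to when it says ``by a direct calculation, one can show the following result'' and cites \cite{torus-eng}; it is a genuinely different route from the alternative the paper mentions, namely deriving Theorem~\ref{th:main} from the general Theorem~\ref{intr} by identifying the leafwise spectral data. Your rational case is essentially complete and correct: the change of variables $(k,l)\mapsto(m,n)=(qk+pl,-pk+ql)$ maps $\ZZ^2$ onto a sublattice of index $p^2+q^2$, the ellipse becomes $m^2+\varepsilon^2n^2<R^2$ with the $m$-range $\varepsilon$-independent, the fibers over each $m$ are arithmetic progressions of step $p^2+q^2$, and the sum over the finitely many $m$ reproduces the stated formula with an $O(1)$ total error. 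The irrational case is correctly diagnosed as the hard one (area and boundary error are both of order $\varepsilon^{-1}$), and the strategy --- slice by $l$, separate the Riemann sum from the rounding error, and use Weyl equidistribution of $\{-\alpha l\}$ over blocks of size $\Delta\sim\varepsilon^{-1/2}$ --- is the right one and does close the argument. Two points in your sketch, however, need to be made precise. First, the claim that $f'_\pm=O(\varepsilon)$ uniformly is false near the endpoints $|l|\approx L$, where the ellipse has vertical tangents and $f_\pm$ has a square-root singularity; you must excise an $o(L)$-neighborhood of $\pm L$ (say $|l|>(1-\varepsilon^{1/4})L$, contributing $O(\varepsilon^{-3/4})$ lattice points) and run the block argument only on the bulk. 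Second, ``absorbing $f_\pm$ into a constant shift and invoking equidistribution'' glosses over the discontinuity of the fractional part: an $O(\varepsilon\Delta)$ perturbation of the argument does not give an $O(\varepsilon\Delta)$ perturbation of $\{\cdot\}$. You need to bound, for each block, the number of $l$ for which $-\alpha l\pm r + c_\pm$ lies within $O(\varepsilon\Delta)$ of an integer --- which again follows from equidistribution/discrepancy --- and only then apply Koksma's inequality (with $V(\{t+a\})$ bounded and $\int_0^1\{t+a\}\,dt=\tfrac12$ independent of $a$, so the $\pm$ contributions cancel up to $O(D_\Delta)$). With $\Delta=\varepsilon^{-1/2}$ the accumulated error is $O(\varepsilon^{-1/2})+O(\varepsilon^{-1}D_\Delta/\Delta)=o(\varepsilon^{-1})$ as you claim, but these two clarifications are needed for the proof to be watertight.
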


One can also derive the asymptotic formulae of Theorem~\ref{th:main}
from Theorem~\ref{intr} (see \cite{torus-eng} for details). By
Theorem~\ref{th:main}, we have, for $\alpha\not\in\mathbb{Q}$,
\begin{multline*}
\operatorname{tr} e^{-t\Delta_\varepsilon}=\int_0^{+\infty}
e^{-t\lambda} dN_{\varepsilon}(\lambda
)=\frac{1}{4\pi}\varepsilon^{-1} \int_0^{+\infty} e^{-t\lambda}
d\lambda + o(\varepsilon^{-1})\\ = \frac{1}{4\pi t}\varepsilon^{-1}
+ o(\varepsilon^{-1}), \quad \varepsilon\rightarrow 0.
\end{multline*}

As an illustration, let us show that, for $\alpha\not\in\mathbb{Q}$,
we have
\[
\operatorname{tr}_{{\mathcal F}_N} e^{-t\sigma(\Delta_\varepsilon)}
= \frac{1}{2t}, \quad t>0,
\]
that agrees with \eqref{e:adiab}. It is easy to see that
$F=\operatorname{span}(U_1), H=\operatorname{span}(U_2)$, where
 \[
 U_1=
 \frac{1}{\sqrt{1+\alpha^2}}\left(\frac{\partial}{\partial u}+\alpha \frac{\partial}{\partial
 v}\right),\quad
 U_2= \frac{1}{\sqrt{1+\alpha^2}}\left(-\alpha \frac{\partial}{\partial u}+\frac{\partial}{\partial
 v}\right).
 \]
Therefore, we have $F^*=\operatorname{span}(\omega_1)$, $
H^*=\operatorname{span}(\omega_2)$, where
\[
\omega_1=\frac{1}{\sqrt{1+\alpha^2}}(du +\alpha dv), \quad
\omega_2=\frac{1}{\sqrt{1+\alpha^2}}(-\alpha du+dv).
\]

The conormal bundle $N^*\cF$ is canonically isomorphic to $H^*$ and,
therefore, $N^*\cF$ is diffeomorphic to $\TT^2\times\RR$ by means of
the diffeomorphism
\[
(u,v,p_2)\in \TT^2\times\RR\mapsto p_2\omega_2(u,v)\in N^*\cF.
\]
The leaves of the lifted foliation ${\cF}_N$ coincide with the
orbits of the induced flow on $N^*\cF\cong \TT^2\times\RR$ given by
\[
T_\tau(u,v,p_2)=(u+\tau, v+\alpha \tau, p_2), \quad (u,v,p_2)\in
\TT^2\times\RR.
\]

The foliation has no holonomy, and there is a natural identification
$\tilde{L}_{(u, v, p_2)}\to L_{(u, v)}$ given by
\[
(u+\tau, v+\alpha \tau, p_2) \mapsto (u+\tau, v+\alpha \tau), \quad
\tau \in \RR.
\]
Therefore, the lift $\Delta_{{\mathcal F}_N}$ of the tangential
Laplacian $\Delta_F$ to a tangentially elliptic (relative to
${\mathcal F}_N$) operator in $C^{\infty}(N^*{\mathcal F})$ is given
by
\[
\Delta_{{\mathcal
F}_N}=-\frac{1}{1+\alpha^2}\left(\frac{\partial}{\partial u}+\alpha
\frac{\partial}{\partial v}\right)^2.
\]

The induced metric $g_N$ in the fibers of $N^*\cF$ is given by
\[
g^N_{(u,v)}(p_2)=p_2^2, \quad (u,v,p_2)\in N^*\cF.
\]
Thus, the principal symbol of $\Delta_\varepsilon$ is a tangentially
elliptic operator in $C^{\infty}(N^*{\mathcal F})$ given by
\[
\sigma(\Delta_\varepsilon) =
-\frac{1}{1+\alpha^2}\left(\frac{\partial}{\partial u}+\alpha
\frac{\partial}{\partial v}\right)^2+p_2^2.
\]

The foliation $\cF$ is given by the orbits of a free action of $\RR$
on $\TT^2$, and its holonomy groupoid is described as follows:
$G=\TT^2\times \RR$, $G^{(0)}=\TT^2$, $s(u,v,\tau)=(u-\tau,v-\alpha
\tau)$, $r(u,v,\tau)=(u,v)$, $(u,v)\in \TT^2, \tau\in\RR,$ and the
product of $(u_1,v_1,\tau_1)$ and $(u_2,v_2,\tau_2)$ is defined if
$u_2=u_1-\tau_1, v_2=v_1-\alpha \tau_1$ and equals
\[
(u_1,v_1,\tau_1)(u_2,v_2,\tau_2)=(u_1,v_1,\tau_1+\tau_2).
\]

The holonomy groupoid $G_{\cF_N}$ is described as follows:
$G_{\cF_N}=\TT^2\times \RR^2$, $G_{\cF_N}^{(0)}=\TT^2\times \RR$,
$s_N(u,v,p_v,\tau)=(u-\tau,v-\alpha \tau,p_v)$,
$r_N(u,v,p_v,\tau)=(u,v,p_v)$, $(u,v,p_v)\in \TT^2\times \RR,
\tau\in\RR,$ and the product of $(u_1,v_1,p_{v,1},\tau_1)$ and
$(u_2,v_2,p_{v,2},\tau_2)$ is defined if $u_2=u_1-\tau_1,
v_2=v_1-\alpha \tau_1, p_{v,1}=p_{v,2}(=p_v),$ and equals
\[
(u_1,v_1,p_v,\tau_1)(u_2,v_2,p_v,\tau_2)=(u_1,v_1,p_v,\tau_1+\tau_2).
\]

The restriction of the operator $\sigma(\Delta_\varepsilon)$ to
$\tilde{L}_{(u,v,p_2)} \cong \RR$ is the second order elliptic
differential operator in the space
$L^2(\RR,\sqrt{1+\alpha^2}d\tau)$:
\[
\sigma(\Delta_\varepsilon)_{(u,v,p_2)} =
-\frac{1}{1+\alpha^2}\frac{\partial^2}{\partial \tau^2}+p_2^2.
\]
The change of variables $\sigma = \tau \sqrt{1+\alpha^2}$ moves this
operator to the space $L^2(\RR,d\sigma)$, getting
\[
\sigma(\Delta_\varepsilon)_{(u,v,p_2)} = -\frac{\partial^2}{\partial
\sigma^2}+p_2^2.
\]
So the heat kernel of the operator
$\sigma(\Delta_\varepsilon)_{(u,v,p_2)}$ in
$L^2(\RR,\sqrt{1+\alpha^2}d\tau)$ is given by
\[
K_t(\tau_1,\tau_2)=(4\pi t)^{-1/2}e^{-p^2_2t}
\exp\left(-\frac{(\tau_1-\tau_2)^2}{4t(1+\alpha^2)}\right).
\]
The corresponding element $k_t$ of $C^\infty(G_{\cF_N})$ such that
\[
e^{-t\sigma(\Delta_\varepsilon)_{(u,v,p_2)}}=R_{(u,v,p_2)}(k_t)
\]
is related with $K_t$ by
\[
K_t(\tau_1,\tau_2)=k_t((u,v,p_2,\tau_1)^{-1}(u,v,p_2,\tau_2)),
\]
where we consider $(u,v,p_2,\tau_1)$ and $(u,v,p_2,\tau_2)$ as
elements of $G_{\cF_N}$ (cf. \eqref{e:Rnu}). Therefore, it is given
by
\begin{align*}
k_t(u,v,p_v,\tau)& =K_t(0,\tau)\\ & =(4\pi t)^{-1/2}e^{-p^2_2t}
\exp\left(-\frac{\tau^2}{4t(1+\alpha^2)}\right), \quad
(u,v,p_v,\tau)\in G_{\cF_N}.
\end{align*}
Putting $\tau=0$, we get a well-defined function $k_t$ on $N^*\cF$,
the restriction to the set of units $G^{(0)}_{\cF_N} = N^*\cF\cong
\TT^2\times\RR$:
\[
k_t(u,v,p_2)=(4\pi t)^{-1/2}e^{-p^2_2t}, \quad (u,v,p_2)\in
\TT^2\times\RR.
\]
Finally, by \eqref{e:trfn}, we obtain
\[
\operatorname{tr}_{{\mathcal F}_N} e^{-t\sigma(\Delta_\varepsilon)}
=\int_{\TT^2\times\RR} k_t(u,v,w,p_2)\,du\,dv\,dp_2 = \frac{1}{2t}.
\]

\section{Some examples of non-Riemannian
foliations} \label{s:nonRiem}

In this Section we consider two examples of non-Riemannian
foliations. We start with one-dimensional foliations given by the
orbits of invariant flows on the three-dimensional Heisenberg
manifold.

\subsection{Riemannian Heisenberg manifolds}\label{s:Heis0}
Recall that the real three-dimensional Heisenberg group $H_3$ is the
Lie subgroup of $\operatorname{GL}(3,\mathbb{R})$ consisting of all
matrices of the form
\[
\gamma (u,v,w)=\begin{bmatrix}
 {1} & {u} & {w} \\
 {0} & {1} & {v}  \\
 0 & 0 & {1} \\
\end{bmatrix}, \quad u,v,w\in\mathbb{R}.
\]
Its Lie algebra $\mathfrak{h}_3$ is the Lie subalgebra of
$gl(3,\mathbb{R})$ consisting of all matrices of the form
\[
X(u,v,w)=\begin{bmatrix}
{0} & {u} & {w} \\
{0} & {0} & {v}  \\
 0 & 0 & {0} \\
\end{bmatrix}, \quad u,v,w\in\mathbb{R}.
\]

Denote by
\[
U=X(1,0,0),\quad V=X(0,1,0),\quad W=X(0,0,1)
\]
the standard basis in the Lie algebra $\mathfrak{h}_3$. The
corresponding left-invariant vector fields on $H_3$ are given by
\[
U=\frac{\partial}{\partial u}, \quad V=\frac{\partial}{\partial
v}+u\frac{\partial}{\partial w}, \quad W=\frac{\partial}{\partial
w},\quad \gamma(u,v,w)\in H.
\]
The dual basis of left-invariant differential one-forms is given by
\[
U^*=du, \quad V^*=dv, \quad W^*=dw-udv.
\]

Consider the uniform discrete subgroup $\Gamma$ of $H_3$ defined as
\[
\Gamma =\{ \gamma (u,v,w): u,v,w \in \mathbb Z \}.
\]
A Riemannian Heisenberg manifold $M$ is defined to be a pair
$(\Gamma \backslash H_3,g)$, where $g$ is a Riemannian metric on
$\Gamma \backslash H_3$ whose lift to $H_3$ is left $H$-invariant.

It is easy to see that $g$ is uniquely determined by the value of
its lift to $H_3$ at the identity $\gamma(0,0,0)$, that is, by a
symmetric positive definite $3\times 3$-matrix.

Let $\alpha  \in \mathbb{R}$. Consider the left-invariant vector
field on $H_3$ associated with
\[
  X(1,\alpha,0)=\begin{bmatrix}
{0} & {1} & {0} \\
{0} & {0} & {\alpha}  \\
 0 & 0 & {0} \\
\end{bmatrix} \in \mathfrak{h}_3.
\]
Since $X(1,\alpha,0)$ is a left-invariant vector field, it
determines a vector field on $M=\Gamma \backslash H_3$. The orbits
of this vector field define a one-dimensional foliation $\mathcal F$
on $M$. The leaf through a point $\Gamma \gamma(u,v,w)\in M$ is
described as
\[
L_{\Gamma \gamma(u,v,w)} =\{ \Gamma \gamma(u+\tau,v+\alpha
\tau,w+\alpha \tau u+\frac{\alpha \tau^2}{2})\in \Gamma \backslash
H_3: \tau\in {\mathbb R}\}.
\]

Let us assume that $g$ corresponds to the identity $3\times
3$-matrix. Consider the adiabatic limit associated with the
Riemannian Heisenberg ma\-ni\-fold $(\Gamma \backslash H_3,g)$ and
the one-dimensional foliation $\mathcal F$. The corresponding
Laplace-Beltrami operator $\Delta_\varepsilon$ on the group $H_3$
has the form:
\begin{multline*}
\Delta_\varepsilon=-\frac{1}{1+\alpha^2}\left(
\frac{\partial}{\partial u}+\alpha\left(\frac{\partial}{\partial
v}+u\frac{\partial}{\partial w} \right) \right)^2 \\
+\varepsilon^2 \Bigg[\frac{1}{1+\alpha^2} \left(
-\alpha\frac{\partial}{\partial u}+\frac{\partial}{\partial
v}+u\frac{\partial}{\partial w} \right)^2
+\frac{\partial^2}{\partial w^2}\Bigg].
\end{multline*}

Using an explicit computation of the heat kernel on the Heisenberg
group, one can show the following asymptotic formula.

\begin{thm}[\cite{matsb}] \label{t:trace}
For any $t>0$, we have as $\varepsilon\to 0$
\begin{equation}\label{e:nil}
\operatorname{tr} e^{-t\Delta_\varepsilon}=
\frac{1}{8\pi^2\varepsilon^{2}}\int_{-\infty}^{+\infty}
\frac{\eta}{\sinh(t\eta)} e^{-t\eta^2} d\eta + o(\varepsilon^{-2}).
\end{equation}
\end{thm}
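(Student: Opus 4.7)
The plan is to exploit the left-invariance of $\Delta_\varepsilon$ on $H_3$ by decomposing $L^2(\Gamma \backslash H_3)$ under the central subgroup $Z(H_3) = \exp(\RR W)$. Since $\gamma(0,0,1)\in\Gamma$, the central characters that appear are $\chi_n(\gamma(0,0,s)) = e^{2\pi i n s}$, $n\in\ZZ$, and one obtains an orthogonal, $\Delta_\varepsilon$-invariant decomposition $L^2(\Gamma\backslash H_3) = \bigoplus_{n\in\ZZ} \cH_n$ on which the central vector field $W$ acts as multiplication by $2\pi i n$. The summand $\cH_0$ is identified with $L^2(\TT^2)$, where $\Delta_\varepsilon$ coincides with the bundle-like torus Laplacian from Section~\ref{s;Riem}; its contribution to $\operatorname{tr} e^{-t\Delta_\varepsilon}$ is $O(\varepsilon^{-1}) = o(\varepsilon^{-2})$ and is absorbed in the remainder.

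For $n\neq 0$, I work in the orthonormal left-invariant frame $U_1 = (1+\alpha^2)^{-1/2}(U+\alpha V)$, $U_2 = (1+\alpha^2)^{-1/2}(-\alpha U+V)$, $U_3 = W$, in which $[U_1, U_2] = W$, $[U_1,W]=[U_2,W]=0$, and $\Delta_\varepsilon = -U_1^2 -\varepsilon^2 U_2^2 - \varepsilon^2 W^2$. Restricted to $\cH_n$, the commutator $[U_1, U_2] = 2\pi i n$ is a nonzero scalar, so $-U_1^2 - \varepsilon^2 U_2^2$ is unitarily equivalent (via $P = -iU_1$, $Q = -iU_2/(2\pi n)$) to the harmonic oscillator $P^2 + (2\pi n \varepsilon)^2 Q^2$ with $[Q,P]=i$, whose spectrum is $\{2\pi|n|\varepsilon(2k+1):k\geq 0\}$. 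A standard theta-function argument (equivalently, Stone--von Neumann together with counting for the given $\Gamma$) shows that $\cH_n$ is the irreducible Heisenberg representation $\pi_n$ with multiplicity exactly $|n|$; combined with the central scalar $-\varepsilon^2 W^2 = \varepsilon^2(2\pi n)^2$, this yields
\[
\operatorname{tr}_{\cH_n} e^{-t\Delta_\varepsilon} = \frac{|n|\,e^{-t(2\pi n \varepsilon)^2}}{2\sinh(2\pi|n|\varepsilon t)}, \quad n\neq 0.
\]

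Summing over $n\neq 0$, set $h = 2\pi\varepsilon$ and $G(\eta) = |\eta| e^{-t\eta^2}/(2\sinh(t|\eta|))$ (with $G(0)=1/(2t)$), so that the sum reads $h^{-1}\sum_{n\neq 0}G(hn)$. Since $G$ is smooth, even, and belongs to $\cS(\RR)$, Poisson summation gives $\sum_{n\in\ZZ} G(hn) = h^{-1}\int_\RR G + O(h^N)$ for every $N$, whence
\[
\frac{1}{h}\sum_{n\neq 0}G(hn) = \frac{1}{h^2}\int_\RR G(\eta)\,d\eta - \frac{G(0)}{h} + O(h^{N-1}) = \frac{1}{8\pi^2\varepsilon^2}\int_{-\infty}^{\infty}\frac{\eta\,e^{-t\eta^2}}{\sinh(t\eta)}\,d\eta + O(\varepsilon^{-1}),
\]
which matches \eqref{e:nil}.

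The step I expect to require the most care is the multiplicity assertion that the $H_3$-module $\cH_n$ is precisely $\pi_n^{\oplus|n|}$, since this is where the specific arithmetic of the cocompact lattice $\Gamma$ enters; the cleanest route is to realise $\cH_n$ as $L^2$-sections of a degree-$n$ line bundle over $\TT^2$ and exhibit an explicit theta-function basis for the multiplicity space. Everything else is either a direct left-invariant calculation on $H_3$ or an elementary Poisson-summation estimate for a Schwartz integrand.
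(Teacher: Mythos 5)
Your proof is correct, and it is self-contained in a way the paper is not: the paper simply cites Yakovlev \cite{matsb} and remarks that his argument rests on an explicit computation of the heat kernel on the Heisenberg group (a Gaveau-type integral formula, wrapped over $\Gamma$ and integrated over a fundamental domain). You instead compute the spectrum exactly via the Stone--von Neumann/Brezin decomposition $L^2(\Gamma\backslash H_3)=\bigoplus_n \cH_n$ with $\cH_n\cong\pi_n^{\oplus|n|}$ for $n\neq 0$, reduce the leafwise part of $\Delta_\varepsilon$ on each irreducible summand to a harmonic oscillator with frequency $2\pi|n|\varepsilon$, and then extract the asymptotics by Poisson summation in $n$ with the Schwartz integrand $G(\eta)=\eta e^{-t\eta^2}/(2\sinh t\eta)$. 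This is a genuinely different route to the same exact trace: the wrapping/heat-kernel method sums over closed geodesics while yours sums over eigenvalues, and yours has the advantage of giving the full eigenvalue list and an $O(\varepsilon^{-1})$ remainder with no extra work. The two points that need care in your write-up are both handled correctly: the multiplicity $|n|$ (which you rightly flag as the genuinely arithmetic input, provable either by theta functions or by the Howe--Richardson/Brezin multiplicity formula for $\ZZ^3\subset H_3$), and the sign of the Laplacian $\Delta_\varepsilon=-U_1^2-\varepsilon^2 U_2^2-\varepsilon^2 W^2$ (the displayed formula for $\Delta_\varepsilon$ in the paper's Section~\ref{s:Heis0} has a sign typo on the $\varepsilon^2$ bracket, which you silently corrected). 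Note also that the $n=0$ sector is not quite the flat-torus Laplacian from Section~\ref{s;Riem} — the $\varepsilon^2 W^2$ term contributes a one-dimensional Euclidean direction as well — but its trace is still $O(\varepsilon^{-1})$ by elementary means, so the estimate you need holds.
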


Now we show that the formula (\ref{e:nil}) can be written in the
form (\ref{e:ncWeyl}).

\begin{thm}\label{t:nil}
Under current assumptions, for any $t>0$, we have
\[
\operatorname{tr} e^{-t\Delta_\varepsilon}=
\frac{1}{4\pi^2\varepsilon^{2}}\operatorname{tr}_{{\mathcal F}_N}
e^{-t\sigma(\Delta_\varepsilon)}+o(\varepsilon^{-2}), \quad
\varepsilon\to 0,
\]
where $\sigma(\Delta_\varepsilon)$ is defined by
\eqref{e:principal}.
\end{thm}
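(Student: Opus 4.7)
By Theorem~\ref{t:trace}, the assertion of Theorem~\ref{t:nil} reduces to verifying the identity
\[
\operatorname{tr}_{\cF_N} e^{-t\sigma(\Delta_\varepsilon)} = \frac{1}{2}\int_{-\infty}^{+\infty} \frac{\eta\,e^{-t\eta^2}}{\sinh(t\eta)}\,d\eta.
\]
The plan is to describe $N^*\cF$ and the linearized foliation $\cF_N$ explicitly using the left-invariant coframe on the Heisenberg group, to recognize the leafwise restriction of $\sigma(\Delta_\varepsilon)$ as a shifted harmonic oscillator, and to evaluate the noncommutative trace using Mehler's formula combined with a Gaussian integration.

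Introduce the orthonormal coframe $(\omega_2,\omega_3)=((-\alpha U^*+V^*)/\sqrt{1+\alpha^2},\,W^*)$ of $H^*\cong N^*\cF$, and parameterize $N^*\cF$ by $(u,v,w,p_2,p_3)$ via $\nu=p_2\omega_2+p_3\omega_3$; the canonical volume form is $d\nu = du\,dv\,dw\,dp_2\,dp_3$ and $\operatorname{vol}(M)=1$. Let $X^u=(U+\alpha V)/\sqrt{1+\alpha^2}$ be the unit leafwise vector field, whose flow is right translation by $\exp(sX^u)$. A direct computation of the induced cotangent flow (through the nilpotent adjoint action of the Heisenberg group) shows that it preserves $N^*\cF$ and acts on the fibers by $(p_2,p_3)\mapsto(p_2+sp_3,p_3)$; hence the leaves of $\cF_N$ are the orbits of the vector field $\widetilde X^u = X^u + p_3\,\partial_{p_2}$. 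Since $\Delta_F=-(X^u)^2$ lifts to $\Delta_{\cF_N}=-(\widetilde X^u)^2$, formula~\eqref{e:principal} yields
\[
\sigma(\Delta_\varepsilon) = p_2^2 + p_3^2 - (\widetilde X^u)^2,
\]
whose restriction to the leaf through $\nu_0=(u_0,v_0,w_0,p_2^0,p_3^0)$, parameterized by arclength $s$, is
\[
\sigma(\Delta_\varepsilon)_{\nu_0} = -\partial_s^2 + (p_2^0+sp_3^0)^2 + (p_3^0)^2.
\]

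For $p_3^0\neq 0$, the substitution $\sigma=s+p_2^0/p_3^0$ converts this operator into the harmonic oscillator $-\partial_\sigma^2+(p_3^0)^2\sigma^2$ shifted by the constant $(p_3^0)^2$, and Mehler's formula gives the on-diagonal heat kernel
\[
K_t(\sigma,\sigma) = e^{-t(p_3^0)^2}\sqrt{\frac{|p_3^0|}{2\pi\sinh(2t|p_3^0|)}}\,\exp\bigl(-|p_3^0|\sigma^2\tanh(t|p_3^0|)\bigr).
\]
The unit $\nu_0\in G^{(0)}_{\cF_N}$ corresponds to $s=0$, i.e.\ $\sigma=p_2^0/p_3^0$, so evaluating the kernel there produces the integrand $k(\nu_0)$ in~\eqref{e:trfn}. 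A Gaussian integration in $p_2$ followed by the identity $\sinh(2x)\tanh(x)=2\sinh^2(x)$ yields $|p_3^0|\,e^{-t(p_3^0)^2}/(2\sinh(t|p_3^0|))$; integrating trivially over $M$ and then over $p_3^0$, using $|\eta|/\sinh(t|\eta|)=\eta/\sinh(t\eta)$, produces the required identity. The set $\{p_3=0\}$ has measure zero, and the integrand extends continuously across it since $|p_3|/\sinh(t|p_3|)\to 1/t$ as $p_3\to 0$.

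The main technical obstacle is the careful book-keeping of coordinates, arclength parameters along leaves, and the canonical volume form $d\nu$, together with the fact that, in contrast to the Riemannian case of Theorem~\ref{ad:main}, the terms $g^N(\eta,\eta)$ and $\Delta_{\cF_N}$ in~\eqref{e:principal} do not commute; this non-commutativity is precisely what transforms the leafwise operator into a harmonic oscillator and is the source of the Mehler factor $\eta/\sinh(t\eta)$ in the final formula.
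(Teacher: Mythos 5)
Your proposal is correct and follows essentially the same route as the paper: reduce via Theorem~\ref{t:trace} to evaluating $\operatorname{tr}_{\cF_N} e^{-t\sigma(\Delta_\varepsilon)}$, describe $N^*\cF$ and $\cF_N$ via the left-invariant coframe, recognize the leafwise restriction as a shifted harmonic oscillator, apply Mehler's formula, and perform the Gaussian integration in $p_2$. The only cosmetic discrepancies are a sign in the cotangent flow on $p_2$ (the paper gets $p_2-\tau\sqrt{1+\alpha^2}\,p_3$), which is immaterial since the oscillator $(p_2\pm sp_3)^2$ is unchanged under $s\mapsto -s$, and the unproved assertion $d\nu=du\,dv\,dw\,dp_2\,dp_3$, which the paper verifies by an explicit foliated-chart computation of the transverse Liouville measure.
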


\begin{proof}
We have $F=\operatorname{span}(U_1)$,
$H=\operatorname{span}(U_2,U_3)$, where
 \begin{align*}
 U_1&=\frac{1}{\sqrt{1+\alpha^2}}(U+\alpha V)=
 \frac{1}{\sqrt{1+\alpha^2}}\left[\frac{\partial}{\partial u}+\alpha \left(\frac{\partial}{\partial
v}+u\frac{\partial}{\partial
w}\right)\right],\\
 U_2&=\frac{1}{\sqrt{1+\alpha^2}}(-\alpha U+V)=
 \frac{1}{\sqrt{1+\alpha^2}}\left[-\alpha \frac{\partial}{\partial u}+\frac{\partial}{\partial
v}+u\frac{\partial}{\partial w}\right],\\
 U_3&=W=\frac{\partial}{\partial w}.
 \end{align*}
We also have $F^*=\operatorname{span}(\omega_1)$,
$H^*=\operatorname{span}(\omega_2,\omega_3)$, where
\begin{align*}
\omega_1& = \frac{1}{\sqrt{1+\alpha^2}}(U^*+\alpha
V^*)=\frac{1}{\sqrt{1+\alpha^2}}(du+\alpha dv), \\
\omega_2&=\frac{1}{\sqrt{1+\alpha^2}}(-\alpha
U^*+V^*)=\frac{1}{\sqrt{1+\alpha^2}}(-\alpha
du+dv), \\
\omega_3&=dw-udv.
\end{align*}

The conormal bundle $N^*\cF$ is canonically isomorphic to $H^*$ and,
therefore, its fiber at $\Gamma\gamma(u,v,w)$ consists of all
$p_2\omega_2(u,v,w)+p_2\omega_3(u,v,w)\in T^*_{\gamma(u,v,w)}H_3$
with $p_2,p_3\in\RR$. So $N^*\cF$ is the quotient of
$H_3\times\RR^2$ by the induced action of $\Gamma$. The leaves of
the lifted foliation ${\cF}_N$ coincide with the orbits of the
induced flow on $N^*\cF$, which given by:
\[
T_\tau(u,v,w,p_2,p_3)=(u+\tau,v+\alpha \tau,w+\alpha \tau
u+\frac{\alpha \tau^2}{2},p_2-\tau\sqrt{1+\alpha^2}p_3,p_3).
\]

The foliation has no holonomy, and there is a natural identification
of the leaf $\tilde{L}_{(u,v,w,p_2,p_3)}$ of $\cF_N$ through
$\Gamma(u,v,w,p_2,p_3)\in N^*\cF$ with the leaf $L_{\gamma(u,v,w)}$
of $\cF$ through $\Gamma \gamma(u,v,w)\in M$ given by
\[
(u+\tau,v+\alpha \tau,w+\alpha \tau u+\frac{\alpha
\tau^2}{2},p_2-\tau\sqrt{1+\alpha^2}p_3,p_3)\mapsto (u+\tau,v+\alpha
\tau,w+\alpha \tau u+\frac{\alpha \tau^2}{2}).
\]
Therefore, the lift $\Delta_{{\mathcal F}_N}$  of the tangential
Laplacian $\Delta_F$ to a tangentially elliptic (relative to
${\mathcal F}_N$) operator in $C^{\infty}(N^*{\mathcal F})$ is given
by
\[
\Delta_{{\mathcal
F}_N}=-\frac{1}{1+\alpha^2}\left(\frac{\partial}{\partial u}+\alpha
\left(\frac{\partial}{\partial v}+u\frac{\partial}{\partial
w}\right)-\sqrt{1+\alpha^2}p_3\frac{\partial}{\partial p_2}
\right)^2.
\]

The induced metric $g^N$ in the fibers of $N^*\cF$ is given by
\[
g^N_{(u,v,w)}(p_2,p_3)=p_2^2+p^2_3, \quad (u,v,w,p_2,p_3)\in N^*\cF.
\]
Thus, the principal symbol of $\Delta_\varepsilon$ is a tangentially
elliptic operator in $C^{\infty}(N^*{\mathcal F})$ given by
\begin{equation}\label{e:nilsigma}
\sigma(\Delta_\varepsilon) =
-\frac{1}{1+\alpha^2}\left(\frac{\partial}{\partial u}+\alpha
\left(\frac{\partial}{\partial v}+u\frac{\partial}{\partial
w}\right)-\sqrt{1+\alpha^2}p_3\frac{\partial}{\partial p_2}
\right)^2 +p_2^2+p^2_3.
\end{equation}

The restriction of the operator $\sigma(\Delta_\varepsilon)$ to
$\tilde{L}_{(u,v,w,p_2,p_3)}\cong \RR$ is the second order elliptic
differential operator in the space
$L^2(\RR,\sqrt{1+\alpha^2}d\tau)$:
\[
\sigma(\Delta_\varepsilon)_{(u,v,w,p_2,p_3)} =
-\frac{1}{1+\alpha^2}\frac{\partial^2}{\partial
\tau^2}+(p_2-\tau\sqrt{1+\alpha^2}p_3)^2+p^2_3.
\]
The change of variable $\sigma=\tau\sqrt{1+\alpha^2}$ transfers this
operator to the space $L^2(\RR,d\sigma)$, getting
\[
\sigma(\Delta_\varepsilon)_{(u,v,w,p_2,p_3)} =
-\frac{\partial^2}{\partial \sigma^2}+(p_2-\sigma p_3)^2+p^2_3.
\]
Now we use the well-known Mehler formula for the heat kernel $H_t$
of the harmonic oscillator $H_\omega=-\frac{\partial^2}{\partial
x^2}+\omega^2x^2$ in $L^2(\RR, dx)$:
\[
H^\omega_t(x,y)=(4\pi t)^{-1/2}\left(\frac{2\omega t}{\sinh(2\omega
t)} \right)^{1/2}\exp\left(-\frac{\frac{2\omega t}{\sinh(2\omega
t)}[\cosh(2\omega t)(x^2+y^2)-2xy]}{4t}\right).
\]
So the heat kernel of the operator
$\sigma(\Delta_\varepsilon)_{(u,v,w,p_2,p_3)}$ in
$L^2(\RR,\sqrt{1+\alpha^2}d\tau)$ is given by
\[
K_t(\tau_1,\tau_2)=
H^{p_3}_t\left(\tau_1\sqrt{1+\alpha^2}-\frac{p_2}{p_3},\tau_2\sqrt{1+\alpha^2}
-\frac{p_2}{p_3}\right).
\]

The foliation $\cF$ is given by the orbits of a free action of $\RR$
on $M$, and its holonomy groupoid is described as follows:
$G=M\times \RR$, $G^{(0)}=M$, for any $(u,v,w,\tau)\in G$,
$s(u,v,w,\tau)=(u-\tau,v-\alpha \tau,w-\alpha \tau u+\frac{\alpha
\tau^2}{2})$ and $r(u,v,w,\tau)=(u,v,w)$. The product of
$(u_1,v_1,w_1,\tau_1)$ and $(u_2,v_2,w_2,\tau_2)$ is defined if
$u_2=u_1-\tau_1, v_2=v_1-\alpha \tau_1, w_2=w_1-\alpha \tau_1
u_1+\frac{\alpha \tau_1^2}{2}$ and equals $(u_1,v_1,w_1,\tau_1)
(u_2,v_2,w_2,\tau_2)=(u_1,v_1,w_1, \tau_1+\tau_2)$.

The holonomy groupoid $G_{\cF_N}$ is described as follows:
$G_{\cF_N}=M\times \RR^2\times \RR$, $G_{\cF_N}^{(0)}=M\times
\RR^2$, and for any $(u,v,w,p_2,p_3,\tau)\in G_{\cF_N}$, we have
$s_N(u,v,w,p_2,p_3,\tau)=(u-\tau,v-\alpha \tau,w-\alpha \tau
u+\frac{\alpha \tau^2}{2},p_2+\tau\sqrt{1+\alpha^2}p_3,p_3)$, and
$r_N(u,v,w,p_2,p_3,\tau)=(u,v,w,p_2,p_3)$, .

The corresponding element $k_t$ of $C^\infty(G_{\cF_N})$ is given by
\[
k_t(u,v,w,p_2,p_3,\tau)=K_t(0,\tau)
=H^{p_3}_t\left(-\frac{p_2}{p_3},\tau \sqrt{1+\alpha^2}
-\frac{p_2}{p_3}\right).
\]

Putting $\tau=0$, we get a well-defined function $k_t$ on $N^*\cF$,
the restriction to the set of units $G^{(0)}_{\cF_N} = N^*\cF$:
\[
k_t(u,v,w,p_2,p_3)=(4\pi t)^{-1/2}\left(\frac{2p_3 t}{\sinh(2p_3 t)}
\right)^{1/2} e^{-p^2_3t}\exp\left(-\frac{\sinh(p_3 t)}{\cosh(p_3
t)}\frac{p^2_2}{p_3} \right).
\]

To compute the Liouville transverse volume form, we choose a
foliated chart defined by
\[
x_1=u, \quad y_1=v-\alpha u, \quad y_2=w-\frac12\alpha u^2.
\]
Thus, we have
\[
dy_1=dv-\alpha du=\sqrt{1+\alpha^2}\omega_2, \quad dy_2=dw-\alpha
u\,du=\omega_3+\sqrt{1+\alpha^2}u\omega_2.
\]
The identity
\[
\eta_1dy_1+\eta_2dy_2=p_2\omega_2+p_3\omega_3
\]
implies a relation between the dual coordinates $(\eta_1,\eta_2)$ in
this foliated chart and $(p_2,p_3)$:
\[
\eta_1=\frac{1}{\sqrt{1+\alpha^2}}p_2-up_3,\quad \eta_2=p_3.
\]
So we see that the combination of the transverse Liouville measure
$|dy_1\wedge dy_2\wedge d\eta_1\wedge d\eta_2|$ with the leafwise
Riemannian volume density $|\omega_1|$ determines the measure $d\nu$
on $N^*\cF$ given by
\[
d\nu =du\,dv\,dw\,dp_2\,dp_3.
\]

By \eqref{e:trfn}, it follows that
\begin{align*}
\operatorname{tr}_{{\mathcal F}_N} e^{-t\sigma(\Delta_\varepsilon)}
& =\int k_t(u,v,w,p_2,p_3)\,du\,dv\,dw\,dp_2\,dp_3\\ & = (4\pi
t)^{-1/2}\iint \left(\frac{2p_3 t}{\sinh(2p_3 t)} \right)^{1/2}
e^{-p^2_3t}\exp\left(-\frac{\sinh(p_3 t)}{\cosh(p_3
t)}\frac{p^2_2}{p_3} \right)\,dp_2\,dp_3.
\end{align*}
Integrating with respect to $p_2$ in the last integral and using the
formula
\[
\int e^{-ax^2}\,dx=\frac{\pi^{1/2}}{a^{1/2}},
\]
we obtain
\begin{align*}
\operatorname{tr}_{{\mathcal F}_N} e^{-t\sigma(\Delta_\varepsilon)}
& = \frac{1}{2}\int \left(\frac{2p_3}{\sinh(2p_3 t)} \right)^{1/2}
\left(\frac{p_3\cosh(p_3 t)}{\sinh(p_3 t)}\right)^{1/2}
e^{-p^2_3t}\,dp_3\\
& = \frac{1}{2}\int \frac{p_3}{\sinh(p_3 t)} e^{-p^2_3t}\,dp_3.
\end{align*}
Combined with (\ref{e:nil}), this concludes the proof.
\end{proof}

\subsection{Sol-manifolds} In this Section we consider another
example of adiabatic limits associated with non-Rie\-mannian
foliations, namely, with one-dimensional foliations given by the
orbits of invariant flows on Riemannian Sol-manifolds.

Let $A=\begin{pmatrix} a_{11} & a_{12}\\ a_{21} & a_{22}
\end{pmatrix}\in
    \operatorname{SL}(2,\mathbb{Z})$ such that $|\operatorname {tr} A|>2$.
A Sol-manifold can be defined as the quotient
$M^3_A=(\mathbb{T}^2\times\mathbb{R}) /\mathbb{Z}$, where the action
of $\mathbb{Z}$ on $\mathbb{T}^2\times\mathbb{R}$ is defined by the
diffeomorphism $T_A$ of $\mathbb{T}^2\times\mathbb{R}$ given by
\[
        T_A(x, y, z)= (a_{11}x+a_{12}y, a_{21}x+a_{22}y,
        z+1), \quad (x,y)\in \mathbb{T}^2= {\mathbb R}^2/{\mathbb Z}^2, \quad
z\in \mathbb{R}.
\]
It is well known that $M^3_A$ is a compact manifold.

Denote by $\lambda$ and $\lambda^{-1}$ the eigenvalues of $A$ and
suppose that $\lambda>1$. Let $\{e_u,e_v\}$ be the corresponding
positively oriented base of eigenvectors of $A$. We introduce
another coordinate system $(u,v,w)$ on $\RR^3$ by
\[
(x,y)=u e_u+v e_v, \quad w=z\ln \lambda.
\]
In the coordinates $(u,v,w)$ the map $T_A$ takes the following form:
\[
        T_A(u, v, w)= (\lambda u,
        \lambda^{-1}v, w+\ln \lambda). \]

Denote by $\widetilde M^3$ the Lie subgroup of the Lie group
$\operatorname{GL}(3, \mathbb {R})$ of matrices of the form
\[
\gamma(u,v,w)=\begin{pmatrix}
 e^w & 0      &u\\
 0   & e^{-w} &v\\
 0   & 0      &1
\end{pmatrix}, \quad u,v,w\in {\mathbb R}.
\]
The corresponding Lie algebra $\mathfrak{m}^3$ of $\widetilde{M}^3$
is the Lie subalgebra of the Lie algebra
$\operatorname{gl}(3,\mathbb{R})$ of matrices of the form
\[
X(u,v,w)=\begin{pmatrix}
 w & 0      &u\\
 0   & -w &v\\
 0   & 0      &0
\end{pmatrix}, \quad u,v,w\in {\mathbb R}.
\]
Denote by
\[
U=X(1,0,0),\quad V=X(0,1,0),\quad W=X(0,0,1)
\]
the standard basis in the Lie algebra $\mathfrak{m}^3$. The
corresponding left-invariant vector fields on $\widetilde{M}^3$ are
given by
\[
U=e^{w}\frac{\partial}{\partial u}, \quad
V=e^{-w}\frac{\partial}{\partial v},\quad W=\frac{\partial}{\partial
w},\quad \gamma(u,v,w)\in \widetilde{M}^3.
\]

The dual basis $U^*,V^*,W^*$ of left-invariant one-forms is given by
\[
U^*=e^{-w}du,\quad V^*=e^{w}dv,\quad W^*=dw.
\]

Observe that in the new coordinates two points $(u,v)$ and
$(u^\prime,v^\prime)$ define the same point on $\mathbb T^2$ if and
only if $(u-u^\prime,v-v^\prime)=k(c^1_1,c^2_1)+m(c^1_2,c^2_2)$,
where $k,m\in \mathbb Z$ and $\{e_1=(c^1_1,c^2_1),
e_2=(c^1_2,c^2_2)\}$ is the basis of the integer lattice $\Gamma$ of
$\mathbb{T}^2$ defined by the equation
\[
A=\begin{pmatrix} a_{11} & a_{12}\\ a_{21} & a_{22}
\end{pmatrix}=\begin{pmatrix} c^1_1 & c^1_2\\ c^2_1 & c^2_2
\end{pmatrix}^{-1}\begin{pmatrix} \lambda & 0\\ 0 & \lambda^{-1}
\end{pmatrix}\begin{pmatrix} c^1_1 & c^1_2\\ c^2_1 & c^2_2
\end{pmatrix}.
\]
Therefore, if we introduce a discrete subgroup $G_A$ of
$\widetilde{M}^3$, which consists of all $\gamma(u,v,w)\in
\widetilde{M}^3$ such that
  \[
(u,v)=k(c^1_1,c^2_1)+l(c^1_2,c^2_2)\in\Gamma,\quad w=m\ln\lambda,
\quad k,l,m\in \mathbb Z,
 \]
then it is easy to see that $M^3_A = G_A \backslash
\widetilde{M}^3$.

\begin{defn} A Riemannian Sol-manifold is a pair $(M^3_A,g)$, where
$g$ is a Riemannian metric on $M^3_A$ whose lift on
$\widetilde{M}^3$ is left $\widetilde{M}^3$-invariant.
\end{defn}

It is easy to see that $g$ is uniquely determined by its value at
the identity $\gamma(0,0,0)$ of the Lie group $\widetilde{M}^3$,
which is given by a symmetric positive definite $3\times 3$-matrix.

Suppose that $g$ corresponds to the identity matrix:
\[
g(G_A\gamma(0,0,0))=
\begin{pmatrix}
  1 & 0 & 0 \\
  0 & 1 & 0 \\
  0 & 0 & 1 \\
\end{pmatrix}.
\]
In other words, the basis $U,V,W$ is orthonormal. Then, in the
coordinates $(u,v,w)$, the metric $g$ has the form
\[
g(G_A\gamma(u,v,w))=e^{-2w}du^2+e^{2w} dv^2 +dw^2.
\]

Let $\alpha  \in \mathbb{R}$. Consider the left-invariant vector
field on $\widetilde{M}^3$ associated with
\[
  X(1,\alpha,0)=\begin{bmatrix}
{0} & 0 & {1} \\ {0} & {0} & {\alpha}  \\
 0 & 0 & {0} \\
\end{bmatrix} \in \mathfrak{m}^3.
\]
The orbits of the corresponding vector field on $M^3_A$ define a
one-dimensional foliation $\mathcal F$. The leaf $L_{G_A
\gamma(u,v,w)}$ of $\mathcal F$ through $G_A \gamma(u,v,w)\in
M^3_{A}$ has the form
\[
L_{G_A \gamma(u,v,w)} =\{ G_A\gamma(u+e^w t,v+\alpha e^{-w} t,w)\in
 {M}^3_A: t\in {\mathbb R}\}.
\]

One can show that, if $\alpha=0$, the metric $g$ is bundle-like and
the foliation $\mathcal F$ is Riemannian. Otherwise, the metric $g$
is not bundle-like.

Let us consider the adiabatic limit associated with the Riemannian
Sol-manifold $(G_A \backslash \widetilde{M}^3,g)$ and with the
foliation $\mathcal F$. Then the Riemannian metric $g_\varepsilon$
on $G_A \backslash \widetilde{M}^3$ defined by (\ref{e:gh}) in the
local coordinates $(u,v,w)$ has the form
\[
g_\varepsilon=\frac{1}{1+\alpha^2}(e^{-w}du +\alpha e^{w}dv)^2+
\varepsilon^{-2}\left[\frac{1}{1+\alpha^2}(-\alpha e^{-w}du
+e^{w}dv)^2+ dw^2\right].
\]

For any $\varepsilon>0$, consider the Laplace-Beltrami operator
$\Delta_\varepsilon$ defined by $g_\varepsilon$:
\begin{equation*}
\Delta_\varepsilon=-\frac{1}{1+\alpha^2}
\left(e^{w}\frac{\partial}{\partial u}+\alpha
e^{-w}\frac{\partial}{\partial v}\right)^2 -
\varepsilon^2\left(\frac{1}{1+\alpha^2} \left(-\alpha
e^{w}\frac{\partial}{\partial u}+e^{-w}\frac{\partial}{\partial
v}\right)^2+ \frac{\partial^2}{\partial w^2}\right).
\end{equation*}

\begin{theorem}[\cite{MZ,SMZ}]\label{t:mainsolmanifolds}
For $\lambda>0$  for the eigenvalue distribution function
$N_\varepsilon(\lambda)$ of $\Delta_\varepsilon$, we have:

\begin{itemize}
\item For $\alpha\neq 0$
\[
N_\varepsilon(\lambda)=\frac{1}{4\pi^2}\lambda^{\frac{3}{2}}\varepsilon^{-2}+o(\varepsilon^{-2}).
\]
\item For $\alpha= 0$
  \[
  N_\varepsilon(\lambda)=\frac{1}{6\pi^2}\lambda^{\frac{3}{2}}\varepsilon^{-2}+o(\varepsilon^{-2}).
  \]
\end{itemize}

\end{theorem}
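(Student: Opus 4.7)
My plan splits according to whether $\alpha=0$ (Riemannian case) or $\alpha\neq 0$ (non-Riemannian case), since only the former is amenable to Theorem~\ref{intr}.

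For $\alpha=0$, I would first verify the statement (made just before the theorem) that the metric $g$ is bundle-like and hence $\cF$ is Riemannian. Theorem~\ref{intr} then applies with $q=2$, reducing the problem to computing $N_{\cF}(\lambda)$ for the leafwise Laplacian $\Delta_F=-e^{2w}\partial_u^2$. On the holonomy cover of a leaf, $\Delta_F$ is a constant-coefficient operator on $\RR$ (the leaves are copies of $\RR$ parametrized by the orbits of $U=e^w\partial_u$), so the spectral projection kernel $e_\lambda$ is given by an elementary Fourier integral, yielding $\Tr e_\lambda(x)=\pi^{-1}e^{-w(x)}\sqrt{\lambda_+}$. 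Integrating over a fundamental domain for $M^3_A$ gives $N_{\cF}(\lambda)=C\sqrt{\lambda_+}$ with an explicit $C$; substituting into the Abel-type integral of Theorem~\ref{intr} produces the asserted $\tfrac{1}{6\pi^2}\lambda^{3/2}\varepsilon^{-2}$.

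For $\alpha\neq 0$, Theorem~\ref{intr} is unavailable and one must work with $\Delta_\varepsilon$ directly. My plan is to exploit the $\TT^2$-bundle structure $M^3_A\to S^1$ with monodromy $A$ via partial Fourier decomposition on the fiber. This splits $L^2(M^3_A)=\bigoplus_{\mathcal{O}}H_{\mathcal{O}}$ along orbits $\mathcal{O}$ of $A^T$ acting on the dual lattice $\widehat{\Gamma}$. Each nontrivial $H_{\mathcal{O}}$ is identified with $L^2(\RR,dw)$ through a representative $(k_0,\ell_0)$ via the invariance relation $\hat f(\lambda^{-n}k_0,\lambda^n\ell_0,w)=\hat f(k_0,\ell_0,w-n\ln\lambda)$. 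Under this identification, $\Delta_\varepsilon|_{H_{\mathcal{O}}}$ becomes a 1D semiclassical Schr\"odinger operator
\[
H_\varepsilon^{(k_0,\ell_0)}=-\varepsilon^2\partial_w^2+V_\varepsilon^{(k_0,\ell_0)}(w),
\]
whose potential is a positive definite quadratic form in $(k_0,\ell_0)$ with defining matrix of determinant $\varepsilon^2(1+\alpha^2)^2$. Confinement therefore holds whenever $(k_0,\ell_0)\in\widehat{\Gamma}\setminus\{0\}$, so each summand has purely discrete spectrum. Applying 1D semiclassical Weyl on each orbit and unfolding the orbit sum (using $(k,\ell)\leftrightarrow(\lambda^n k,\lambda^{-n}\ell)$) into a sum over $\widehat{\Gamma}\setminus\{0\}$ with $w$ restricted to a fundamental period of length $\ln\lambda$, I approximate the result by a continuous $(k,\ell)$-integral via Riemann summation; evaluating the resulting 3D integral yields $\tfrac{1}{4\pi^2}\lambda^{3/2}\varepsilon^{-2}$.

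The main obstacle is the uniform error control in the orbit summation for $\alpha\neq 0$. Individual semiclassical Weyl gives an error $o(\varepsilon^{-1})$ per orbit, but $\Theta(\varepsilon^{-1})$ orbits contribute at leading order, so a naive application risks inflating the error to the order of the leading term. A uniform-in-$(k_0,\ell_0)$ semiclassical estimate is needed---for instance, trace-norm control of $f(H_\varepsilon^{(k_0,\ell_0)})$ minus its Weyl leading term with quantitative decay in $|(k_0,\ell_0)|$, obtained either from heat-kernel expansions or resolvent parametrices. A secondary subtlety is accounting: a purely local application of Weyl's law to the compact manifold $(M^3_A,g_\varepsilon)$ (using $\sqrt{\det g_\varepsilon}=\varepsilon^{-2}$) produces the same coefficient independent of $\alpha$, so the discrepancy between $\tfrac{1}{6\pi^2}$ and $\tfrac{1}{4\pi^2}$ must be tracked through the monodromy-twisted Riemann sum rather than read off from local asymptotics. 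I expect this is precisely where the analysis becomes sensitive to the non-Riemannian nature of $\cF$ and where the noncommutative Weyl formula \eqref{e:ncWeyl} ultimately fails to hold.
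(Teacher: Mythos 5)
Your two-case strategy matches what the paper indicates. For $\alpha=0$ the paper invokes Theorem~\ref{ad:main} (equivalently, Theorem~\ref{intr}), exactly as you propose. For $\alpha\neq 0$ the paper's sketch says the proof combines Bolsinov--Dullin--Veselov's spectral decomposition of the Laplacian on Sol-manifolds with the Helffer--Martinez--Robert semiclassical Weyl law for the \emph{modified Mathieu operator}; your plan --- Fourier decomposition along the $\mathbb{T}^2$-fibers, reduction to a family of one-dimensional Schr\"odinger operators on $\RR$ indexed by orbits $\{(\lambda^{-n}k_0,\lambda^n\ell_0)\}_{n\in\ZZ}$ of $A^T$ on $\widehat\Gamma$, then one-dimensional semiclassical Weyl plus orbit summation --- is the same decomposition. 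It would sharpen the connection to note that your leading-order potential is
$(1+\alpha^2)^{-1}\bigl(e^{w}k_0+\alpha e^{-w}\ell_0\bigr)^2 = a\cosh\bigl(2(w-c)\bigr)+b$
with $a=2|\alpha k_0\ell_0|/(1+\alpha^2)$, which is precisely the modified Mathieu potential the paper cites.

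Two points in the $\alpha\neq 0$ argument need repair. First, the inference ``the potential is a positive definite quadratic form in $(k_0,\ell_0)$ with determinant $\propto\varepsilon^2$, therefore confinement holds for $(k_0,\ell_0)\neq 0$'' is a non sequitur: positive definiteness in $(k_0,\ell_0)$ at each fixed $w$ says nothing about growth as $w\to\pm\infty$. Confinement on both ends requires \emph{both} $k_0\neq 0$ and $\ell_0\neq 0$ (e.g.\ if $k_0=0$ then $V(w)\to 0$ as $w\to+\infty$). This is nonetheless true for every nonzero point of $\widehat\Gamma$, but the reason must be stated: the coordinate axes in the $(k,\ell)$-plane are the eigendirections of $A^T$, and since $|\operatorname{tr}A|>2$ forces $\lambda$ irrational, these directions are irrational and meet the dual lattice only at the origin. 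Second, your identified obstacle --- uniformity of the one-dimensional semiclassical remainder over the $\Theta(\varepsilon^{-1})$ contributing orbits, with decay in $|(k_0,\ell_0)|$ --- is indeed where the real work lies; this is what \cite{SMZ} has to supply beyond a naive orbit-by-orbit Weyl law, and your proposal correctly flags it but does not resolve it.

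One small normalization slip in the $\alpha=0$ half: the paper defines $e_\lambda$ as the Schwartz kernel with respect to the lift $\lambda^x$ of the leafwise Riemannian volume (arc length along the leaf), so the on-diagonal value is the constant $\pi^{-1}\sqrt{\lambda_+}$, not $\pi^{-1}e^{-w(x)}\sqrt{\lambda_+}$; the extra $e^{-w}$ appears only if you take kernels with respect to the coordinate measure $du$. Either bookkeeping works, but they must be paired with the matching volume form in the integral $\int_M\Tr e_\lambda\,dx$; mixing them will produce a spurious $\lambda$-dependent ($=$ eigenvalue-of-$A$-dependent) constant. Your closing remark --- that the local Weyl heuristic $N_\varepsilon(\lambda)\sim(6\pi^2)^{-1}\operatorname{vol}(M,g_\varepsilon)\lambda^{3/2}$ reproduces the $\alpha=0$ coefficient but not the $\alpha\neq 0$ one, so the discrepancy is a genuine adiabatic effect --- is correct and is exactly the observation the paper uses to conclude that the noncommutative Weyl formula \eqref{e:ncWeyl} fails here.
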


For $\alpha\neq 0$ the proof of this theorem makes use of the
computation of the spectrum of the Laplace-Beltrami operator on
Sol-manifolds given in \cite{Bolsinov} (see also \cite{zuev06}),
which is a continuation of the study of geodesic flows on Riemannian
Sol-manifolds (see
\cite{TaimanovBolsinov1},\cite{TaimanovBolsinov2}), and
semiclassical Weyl formula \cite{Helffer} for the modified Mathieu
operator
\[
H_\varepsilon=-\varepsilon^2\frac{d^2}{dx^2}+a\operatorname{ch}(2\mu
x), \quad x\in {\mathbb R}.
\]
For $\alpha= 0$ the result is a consequence of
Theorem~\ref{ad:main}.

\begin{theorem}
For $\alpha\neq 0$, we have
\[
\operatorname{tr}_{{\mathcal F}_N} e^{-t\sigma(\Delta_\varepsilon)}=
\frac{1}{2}\int_{-\infty}^{+\infty} \frac{\frac{2\alpha}{1+\alpha^2}
\eta}{\sinh(\frac{2\alpha}{1+\alpha^2} t\eta )} e^{-t\eta^2}\,d\eta.
\]
\end{theorem}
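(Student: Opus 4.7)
The plan is to follow the Heisenberg-case template (Theorem \ref{t:nil}): construct an orthonormal frame adapted to $TM = F\oplus H$; describe the linearized foliation $\cF_N$ explicitly; write down the principal symbol $\sigma(\Delta_\varepsilon)$ on $N^*\cF$; restrict it to leaves of $\cF_N$; apply Mehler's formula for the heat kernel; and integrate against the transverse Liouville measure $d\nu$ on $N^*\cF$.

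I begin by introducing the orthonormal frame
\[
U_1 = \tfrac{1}{\sqrt{1+\alpha^2}}(e^w\partial_u + \alpha e^{-w}\partial_v), \quad U_2 = \tfrac{1}{\sqrt{1+\alpha^2}}(-\alpha e^w\partial_u + e^{-w}\partial_v), \quad U_3 = \partial_w,
\]
with dual coframe $\omega_1,\omega_2,\omega_3$, so that $N^*\cF$ is parametrized by $(u,v,w,p_2,p_3)$ via $\nu = p_2\omega_2+p_3\omega_3$. The leaves of $\cF_N$ are the orbits of the Hamiltonian lift of $U_1$ restricted to $N^*\cF$; a direct calculation produces the flow
\[
T_\tau(u,v,w,p_2,p_3) = \Bigl(u + \tfrac{e^w\tau}{\sqrt{1+\alpha^2}},\ v + \tfrac{\alpha e^{-w}\tau}{\sqrt{1+\alpha^2}},\ w,\ p_2,\ p_3 + \tfrac{2\alpha\tau}{1+\alpha^2}p_2\Bigr),
\]
whose infinitesimal generator is $\tilde U_1 = \tfrac{1}{\sqrt{1+\alpha^2}}(e^w\partial_u + \alpha e^{-w}\partial_v) + \tfrac{2\alpha p_2}{1+\alpha^2}\partial_{p_3}$. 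Then $\Delta_{\cF_N} = -\tilde U_1^2$ and, since the fiber metric in the orthonormal trivialization is $g^N = p_2^2+p_3^2$, by \eqref{e:principal} we obtain $\sigma(\Delta_\varepsilon) = p_2^2 + p_3^2 + \Delta_{\cF_N}$.

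For the restriction step, parametrize each leaf of $\cF_N$ by arclength $\tau$. Along a leaf $\tilde U_1 = \partial_\tau$, so the restricted operator reads
\[
\sigma(\Delta_\varepsilon)_{(u,v,w,p_2,p_3)} = -\partial_\tau^2 + p_2^2 + \Bigl(p_3 + \tfrac{2\alpha\tau}{1+\alpha^2}p_2\Bigr)^2.
\]
For $\alpha p_2\neq 0$ (the complementary locus has measure zero in $N^*\cF$ and does not affect the trace), a translation in $\tau$ converts this into $-\partial_{\sigma}^2 + p_2^2 + \kappa^2\sigma^2$, a harmonic oscillator of frequency $\kappa = \tfrac{2\alpha p_2}{1+\alpha^2}$ plus the additive constant $p_2^2$. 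Mehler's formula then expresses the restriction of its heat kernel to the set of units of $G_{\cF_N}$ in closed form as $k_t(u,v,w,p_2,p_3)$, namely $e^{-p_2^2 t}$ times the usual Mehler prefactor at frequency $|\kappa|$ and an exponential $\exp\bigl(-\tanh(|\kappa|t)\,p_3^2/|\kappa|\bigr)$.

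The main obstacle is identifying the transverse Liouville measure $d\nu$. Following the Heisenberg proof I would pick a foliated chart such as $x_1 = u$, $y_1 = v - \alpha e^{-2w}u$, $y_2 = w$, express $dy_1, dy_2$ in terms of $\omega_2, \omega_3$, solve the linear relation $\eta_1\,dy_1 + \eta_2\,dy_2 = p_2\,\omega_2 + p_3\,\omega_3$ for $(\eta_1, \eta_2)$ as functions of $(p_2, p_3, u, w)$, and then form the $5$-form $\omega_1\wedge dy_1\wedge dy_2\wedge d\eta_1\wedge d\eta_2$. The various factors $e^{\pm w}$ entering through $\omega_2,\omega_3$ and through the transverse coordinate change should cancel, yielding $d\nu = du\,dv\,dw\,dp_2\,dp_3$ (up to sign), exactly as in Theorem \ref{t:nil}. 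With $d\nu$ in hand, the double integral $\iint k_t\,dp_2\,dp_3$ collapses: first the Gaussian in $p_3$, then the identity $\sinh(2x)\tanh(x) = 2\sinh^2 x$, which produces precisely the closed-form expression stated in the theorem.
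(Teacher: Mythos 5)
Your proposal is correct and follows exactly the template the paper itself invokes (``following the proof of Theorem~\ref{t:nil}''): orthonormal frame adapted to $F\oplus H$, parametrization of $N^*\cF$ by $(u,v,w,p_2,p_3)$, cotangent lift of the leafwise flow, restriction of $\sigma(\Delta_\varepsilon)$ to a leaf giving a shifted harmonic oscillator, Mehler's formula, and integration against $d\nu=du\,dv\,dw\,dp_2\,dp_3$.

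One remark worth recording: your computed flow
\[
T_\tau(u,v,w,p_2,p_3)=\Bigl(u+\tfrac{e^w\tau}{\sqrt{1+\alpha^2}},\,v+\tfrac{\alpha e^{-w}\tau}{\sqrt{1+\alpha^2}},\,w,\,p_2,\,p_3+\tfrac{2\alpha\tau}{1+\alpha^2}\,p_2\Bigr)
\]
is in fact the correct cotangent/holonomy lift: since $\phi_{-\tau}^*\omega_2=\omega_2+\tfrac{2\alpha\tau}{\sqrt{1+\alpha^2}}\omega_3$ and $\phi_{-\tau}^*\omega_3=\omega_3$, it is $p_3$ (not $p_2$) that evolves along the leaves. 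The formula displayed in the paper,
$T_\tau(\ldots,p_2,p_3)=(\ldots,\,p_2+\tfrac{2\alpha}{\sqrt{1+\alpha^2}}\tau p_3,\,p_3)$, together with the corresponding $\Delta_{\cF_N}$ (with $p_3\partial_{p_2}$ instead of $p_2\partial_{p_3}$), has $p_2$ and $p_3$ interchanged --- evidently a slip inherited from the Heisenberg case, where it is indeed $p_2$ that varies. Because the Mehler trace depends symmetrically on which of the two transverse momenta plays the role of frequency and which the role of displacement (the Gaussian integral over the ``displacement'' variable converts $\tanh$ and $\sinh 2(\cdot)$ into a single $\sinh$ via $\sinh(2x)\tanh x=2\sinh^2 x$, exactly as you note), the closed form $\tfrac12\int \tfrac{\kappa\eta}{\sinh(\kappa t\eta)}e^{-t\eta^2}\,d\eta$ with $\kappa=\tfrac{2\alpha}{1+\alpha^2}$ is unaffected. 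So your derivation is not only valid but slightly cleaner than the paper's sketch.
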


\begin{proof}
It is easy to see that $F=\operatorname{span}(U_1)$ and
$H=\operatorname{span}(U_2,U_3)$, where
 \begin{align*}
 U_1&=\frac{1}{\sqrt{1+\alpha^2}}(U+\alpha V)=
 \frac{1}{\sqrt{1+\alpha^2}}\left(e^w\frac{\partial}{\partial u}+\alpha e^{-w}\frac{\partial}{\partial
 v}\right),\\
 U_2&=\frac{1}{\sqrt{1+\alpha^2}}(\alpha U- V)=
\frac{1}{\sqrt{1+\alpha^2}}\left(\alpha e^w\frac{\partial}{\partial
u}- e^{-w}\frac{\partial}{\partial
 v}\right),\\
 U_3&=W=\frac{\partial}{\partial w}.
 \end{align*}

Therefore, we have $F^*=\operatorname{span}(\omega_1)$,
$H^*=\operatorname{span}(\omega_2,\omega_3)$, where
\begin{align*}
\omega_1& = \frac{1}{\sqrt{1+\alpha^2}}(U^*+\alpha
V^*)=\frac{1}{\sqrt{1+\alpha^2}}(e^{-w}du +\alpha e^{w}dv), \\
\omega_2&=\frac{1}{\sqrt{1+\alpha^2}}(-\alpha
U^*+V^*)=\frac{1}{\sqrt{1+\alpha^2}}(-\alpha
e^{-w}du+e^{w}dv), \\
\omega_3&=W^*=dw.
\end{align*}

The conormal bundle $N^*\cF$ is canonically isomorphic to $H^*$ and,
therefore, its fiber at $\Gamma\gamma(u,v,w)$ consists of all
$p_2\omega_2(u,v,w)+p_3\omega_3(u,v,w)\in T^*_{\gamma(u,v,w)}M^3_A$
with $p_2,p_3\in\RR$. So $N^*\cF$ is the quotient of
$M^3_A\times\RR^2$ by the induced action of $\Gamma$. The leaves of
the lifted foliation ${\cF}_N$ coincide with the orbits of the
induced flow on $N^*\cF$, which given by:
\[
T_\tau(u,v,w,p_2,p_3)=(u+e^w \tau, v+\alpha e^{-w}
\tau,w,p_2+\frac{2\alpha}{\sqrt{1+\alpha^2}}\tau p_3,p_3).
\]

The foliation has no holonomy, and there is a natural identification
$\tilde{L}_{(u,v,w,p_2,p_3)}\to L_{(u,v,w)}$ given by
\[
(u+e^w \tau,v+\alpha e^{-w}
\tau,w,p_2+\frac{2\alpha}{\sqrt{1+\alpha^2}}\tau p_3,p_3) \mapsto
(u+e^w \tau,v+\alpha e^{-w} \tau,w).
\]
Therefore, the lift $\Delta_{{\mathcal F}_N}$ of the tangential
Laplacian $\Delta_F$ to a tangentially elliptic (relative to
${\mathcal F}_N$) operator in $C^{\infty}(N^*{\mathcal F})$ is given
by
\[
\Delta_{{\mathcal
F}_N}=-\frac{1}{1+\alpha^2}\left(e^{w}\frac{\partial}{\partial
u}+\alpha e^{-w}\frac{\partial}{\partial
v}+\frac{2\alpha}{\sqrt{1+\alpha^2}} p_3\frac{\partial}{\partial
p_2} \right)^2.
\]

The induced metric $g^N$ in the fibers of $N^*\cF$ is given by
\[
g^N_{(u,v,w)}(p_2,p_3)=p_2^2+p^2_3, \quad (u,v,w,p_2,p_3)\in N^*\cF.
\]
Thus, the principal symbol of $\Delta_\varepsilon$ is a tangentially
elliptic operator in $C^{\infty}(N^*{\mathcal F})$ given by
\[
\sigma(\Delta_\varepsilon) =
-\frac{1}{1+\alpha^2}\left(e^{w}\frac{\partial}{\partial u}+\alpha
e^{-w}\frac{\partial}{\partial v}+\frac{2\alpha}{\sqrt{1+\alpha^2}}
p_3\frac{\partial}{\partial p_2} \right)^2 +p_2^2+p^2_3.
\]
Thus, we see that the principal symbol has a form very similar to
the case of Heisenberg manifolds (cf. \eqref{e:nilsigma}). So we can
complete the proof, following the proof of Theorem~\ref{t:nil}.
\end{proof}

By Theorem~\ref{t:mainsolmanifolds}, we have
\[
\operatorname{tr} e^{-t\Delta_\varepsilon}=\int_{0}^{+\infty}
e^{-t\lambda}\,dN_\varepsilon(\lambda)=\frac{3}{8\pi^2\varepsilon^{2}}
\frac{\Gamma(\frac32)}{t^{3/2}}+o(\varepsilon^{-2}), \quad
\varepsilon\to 0.
\]
So we observe that in this case the formula \eqref{e:ncWeyl} does
not hold.

\end{document}